\title{ Expressing an observer in preferred  coordinates by transforming an injective immersion into a surjective diffeomorphism }
\author{Pauline Bernard, Vincent Andrieu and Laurent Praly
\thanks{%
P.~Bernard \textsf{Pauline.Bernard@mines-paristech.fr}
and L.~Praly \textsf{Laurent.Praly@mines-paristech.fr} are with
MINES ParisTech,
PSL Research University,
CAS - %
France,\hfill \break\null\hskip 2\parindent
V.~Andrieu \textsf{vincent.andrieu@gmail.com} is with LAGEP, CNRS, CPE, Universit\'e Lyon 1, France}
}
\definecolor{journalcolor}{rgb}{1,0,0}
\definecolor{longuecolor}{rgb}{0,0,1}
\long\def\journalOK#1{\begingroup\color{journalcolor}#1}
\long\def\journalNO#1{\begingroup}
\long\def\longueOK#1{\begingroup\color{longuecolor}#1}
\long\def\longueNO#1{\begingroup}
\def\stopjournal{\endgroup}
\def\stoplongue{\endgroup}
\def\journal{%
\def\entete{\color{journalcolor} Journal version, compiled on \today}
\let\startjournal=\journalOK
\let\startlongue=\longueNO
}
\def\longue{%
\def\entete{\color{longuecolor} Long version, compiled on \today}
\let\startjournal=\journalNO
\let\startlongue=\longueOK
}
\def\deux{%
\def\entete{\color{journalcolor} Double version, compiled on \today}
\let\startjournal=\journalOK
\let\startlongue=\longueOK
}
\def\NN{{\mathbb N}}    
\def\RR{{\mathbb R}}    
\def\SS{{\mathbb S}}    
\def\Ouvt{\mathcal{O}}
\def\Ouvs{\tilde{\mathcal{O}}} 
\def\Ouvxw{\mathcal{O}_a}
\def\OuvEx{\mathcal A}  
\def\OuvDef{\mathcal{O}_a} 
\def\CL{\texttt{cl}}    
\def\BR{\mathcal{B}}    
\def\comp{^\mathsf{c}}  
\def\Eeps{E_\varepsilon} %
\def\ox{{\hat{\xi }}}   
\def\oX{{\hat{\Xi }}}   
\def\ohi{{\tau^*}}   
\def\ohie{{\tau _e^*}}      
\def\ohia{{ \tau _a^* }}      
\def\ohede#1{{\tau _{e#1}}} 
\def\oh{\tau }          
\def\ohe{\tau _e}          
\def\of{\varphi}      
\def\lambdaa{\lambda}
\def\lambdae{\lambda_e}
\def\vartau{{\scriptstyle\mathfrak{T}}}
\def\temps{\nu}
\def\Id{Id} 
\def\varchi{z}
\def\E{\mathcal{E}}
\DeclareMathAlphabet{\EuScript}{U}{eus}{m}{n}
\SetMathAlphabet{\EuScript}{bold}{U}{eus}{b}{n}
\def\Lyap{\EuScript{V}} 
\def\Supp{\texttt{Supp}}
\def\setphitau{\mbox{$\varphi\mkern -5 mu$}\raise - 0.05em \hbox{\small$\mathcal{T}$}\mkern -4 mu}
\def\R{\Re}
\def\sat{\texttt{sat}}
\def\downparenfill{$\m@th\braceld\leaders\vrule\hfill\bracerd$}
\def\overparen#1{\mathop{\vbox{\ialign{##\crcr\crcr
\noalign{\kern0.3ex}
\downparenfill\crcr\noalign{\kern0.5ex\nointerlineskip}
$\hfil\displaystyle{#1}\hfil$\crcr}}}\limits}
\newtheorem{remark}{Remark}
\newtheorem{problem}{Problem}
\newtheorem{example}{Example}
\def\condC{\mathbb{B}} 
\def\deps{ 2\varepsilon}
\def\Edeps{ E_{2\varepsilon}}
\def\Seps{\Sigma _\varepsilon }
\def\Sdeps{\Sigma _{2\varepsilon }}
\def\Sepscomp{\left(\Sigma _\varepsilon \right)^\mathsf{c}}
\def\Sdepscomp{\left(\Sigma _{2\varepsilon }\right)^\mathsf{c}}
\def\sde{\mathfrak{s}}
\def\jacobcomplet{}
\def\tde{\mathfrak{t}}
\def\hypo{\mathbb{A}}
\def\colorforrevision{\color{black}}
\definecolor{modifPBcolor}{rgb}{0.6,0.2,0.4}
\definecolor{modifcolor}{rgb}{0.2,0.6,0.4}
\def\startmodif{\begingroup\color{black}} 
\def\stopmodif{\endgroup}
\begin{document}
\maketitle
\thispagestyle{myheadings}

\markboth{\entete}{\entete}

\begin{abstract}
When designing observers for nonlinear systems, the dynamics of the given system and of the designed observer are usually not expressed in the same coordinates or even have states evolving in different spaces. In general, the function, denoted $\oh$ (or its inverse, denoted $\ohi$) giving one state in terms of the other is not explicitly known and this creates implementation issues.

We propose to round this problem by expressing the observer dynamics in the the same coordinates as the given system. But this may impose to add extra coordinates, problem that we call \textit{augmentation}. This may also impose to modify the domain or the range of the ``augmented'' $\oh$ or $\ohi$, problem that we call \textit{extension}.

We show that the augmentation problem can be solved partly by a continuous completion of a free family of vectors and that the extension problem can be solved by a function extension making the image of the extended function the whole space. We also show how augmentation and extension can be done without modifying the observer dynamics and therefore with maintaining convergence.

Several examples illustrate our results.
\end{abstract}

\section{Introduction}

\subsection{Context}
In many applications, estimating the state of a dynamical system is
crucial either to build a controller or simply to 
obtain real time information on the system. 
Satisfactory solutions are
known for  systems the dynamics of which are linear in the  preferred  coordinates. But when they are nonlinear,
we are aware of only two ``general
purpose'' observer design methodologies guaranteeing ``non local''
convergence under merely some basic observability
properties: the high gain observers (\cite{KhaSab, Tor, GauHamOth,
GauKup,KhaPra,BesTic}, \ldots) and the nonlinear Luenberger observers
(\cite{Shos,KazKra,AndPra}). For both, the observer state is 
living in a space different from the system state one
and the system state estimate is obtained typically by solving on-line a 
nonlinear equation.

As an illustration, consider 
an harmonic oscillator with unknown frequency with dynamics
\\\null \hfill $\displaystyle 
\dot x_1 = x_2
\  ,\ 
\dot x_2 = -x_1 x_3
\  ,\
\dot x_3 = 0
\  ,\
y=x_1
$\refstepcounter{equation}\label{systOscil}\hfill$(\theequation)$\\[0.7em]
with state $x=(x_1,x_2,x_3)$ in 
\startmodif $\left(\RR^2\setminus\{(0,0)\}\right)\times\RR_{> 0}$ \stopmodif and measurement $y$. 
\startmodif
We are interested 
in estimating the state $x$ from the only knowledge of 
$y$ and  the fact that $x$ evolves in some known set
$\OuvEx$. 
\stopmodif
By following in a very 
orthodox way (see \cite{AndEytPra} for details) the high gain observer design
we get a ``raw'' observer with dynamics
\begin{equation}
\label{LP3}
\dot \ox \;=\; 
\of (\ox,\hat x,y)\;=\; \left(\begin{array}{@{}cccc@{}}
0 & 1 & 0 &0
\\
0&0&1 & 0
\\
0&0&0&1
\\
0&0&0&0
\end{array}\right)\ox
+
\left(\begin{array}{@{}c@{}}
0 \\ 0 \\ 0 \\ \sat(\hat x_1\hat x_3^2)
\end{array}\right)
+
\left(\begin{array}{@{}c@{}}
\ell k_1 \\ \ell^2 k_2 \\ \ell^3 k_3 \\ \ell ^4 k_4
\end{array}\right)[y-\ox_1]
\  ,
\end{equation}
with state $\ox$ in $\RR^4$, where $\sat$ is a saturation function
(see (\ref{LP13})), and from which
 the system state 
estimate
$\hat x$ is 
\startmodif 
given as $\hat x = \oh(\ox)$ 
where $\tau$ is any continuous function which satisfies
\begin{equation}
\label{eq_ExplHGImm}
\tau(x_1,x_2,-x_1x_3,-x_2x_3)=(x_1,x_2,x_3) \qquad \forall x=(x_1,x_2,x_3)\in \mathcal{A} \ .
\end{equation}
The construction of the mapping $\oh$ relies on the inversion to the mapping 
$\ohi(x)=\left(x_1 , x_2 ,  -x_1 x_3 , - x_2 x_3\right)$
which in general has  no explicit solution and is not uniquely defined outside 
of $\ohi(\OuvEx)$. 
The commonly used implicit solution is given as the solution to an
 optimization problem which may be 
$$
\hat x = \oh(\ox)=  \textnormal{Arg}\!\min_{\hat x}
\left|\ox-\ohi(\hat x)\right|^2
\ .
$$
Note however that some other forms are possible.
For instance in \cite{RapMal}, the authors propose to build another implicit solution based on an optimization procedure which yields a global Lipschitz function $\oh$.
The drawback of all these optimization based approaches being that they may be costly to solve in practice.
\stopmodif
Another path is to rely on the Rank theorem,
as in \cite{LevMar}
and  take advantage of
the 
local existence of diffeomorphism $\phi_x$ and $\phi_\xi$ such that
$$
\phi_\xi\circ\tau^*\circ \phi_x =(x,0, \hdots, 0) \ .
$$ 
In this case, one can pick
$
\hat x =\phi_x^{-1}( \pi (\phi_\xi(\ox))
$
where $\pi$ is the projection on the set of the first $n$ components. 
 In 
our example,
$\phi_x$ could be the identity and
$$
\phi_\xi(\xi)=
\left(\xi_1 ,
     \xi_2,
         -\frac{\xi_1\xi_3 + \xi_2 \xi_4}{\xi_1^2 + \xi_2^2}, 
                (\xi_1\xi_4 - \xi_2 \xi_3)\right)
\ .
$$
But, besides the local nature of this technique, finding expressions for $\phi_x^{-1}$ and $\phi_\xi$ may be a very 
difficult task in practice   (see \cite{LafBusGau} for instance). 
And unfortunately $\hat x$ 
is needed to evaluate
the term $\sat(\hat x_1\hat x_3^2)$ in (\ref{LP3}) since the observer dynamics depend on $\tau$.

Instead of a high gain observer design as above, we may use a Luenberger 
non linear observer design (see \cite{Shos,KazKra,AndPra}). It leads to~:
\begin{equation}
\label{8}
\dot \ox \;=\; \of(\ox,y)\;=\; A\,  \ox \;+\; B\,  y
\end{equation}
with $\ox$ in $\RR^4$, $A$ a Hurwitz matrix and $(A,B)$ 
a controllable pair. 
\startmodif
The state estimate $\hat x$ is again given as $\hat x = \oh(\ox)$ where 
$\oh$ is any continuous function which satisfies $\oh(\ohi(x))=x$ for all $x$ in $\OuvEx$ where this time,
\begin{equation}
\label{2}
\ohi(x)\;=\; -(A^2 + x_3I)^{-1}[ABx_1+Bx_2]
\  .
\end{equation}
\stopmodif
A difference with the high gain observer is that 
$\hat x$ is not involved in (\ref{8}), i.e. the observer dynamics do not depend on $\tau $.

\startmodif
In the following, instead of constructing the (implicit) function  $\tau $ by a 
 minimization of a criterion introduced as a design tool, we explicitly construct a diffeomorphism $\tau _e$ allowing us to express the dynamics of the observer in the $x$-coordinates\footnote{%
We will refer to the $x$-coordinates as the ``preferred coordinates'' or ``given coordinates'' because they are chosen by the user to describe the model dynamics. 
}.
\stopmodif
This has been suggested by several researchers
\cite{DezaBusGauRak,MagPas,AstPra} in the case where the observer state $\hat{\xi}$
and the state estimate 
$\hat x$ are related by a diffeomorphism. We
remove this restriction and complete the
preliminary results presented in \cite{AndEytPra}.  

In the example  above,
pulling the observer dynamics in the $\xi$-coordinates back in the $x$-coordinates is
seemingly impossible
since $x$ has dimension $3$ 
whereas $\ox$ has dimension $4$. 
To overcome this difficulty, one could think of using again some kind of projection/restriction. Our proposition is actually of a completely different kind. Instead of considering $\ox$ as the estimation of the image by an immersion $\ohi$ of the state $x$, we see it as the estimation of the image by a diffeomorphism $\ohie$ of an augmented state $(x,w)$.
Fortunately with such a diffeomorphism $\ohie$, we can use all what has been proposed for expressing the observer dynamics in the preferred
coordinates in that case. So with this augmentation of $x$ into $(x,w)$, the design of the commonly used
projection/restriction is replaced by the construction of the diffeomorphism $\ohie$.
We show in Section \ref{jac_comp} that
$\ohie$ can be obtained by
``augmenting'' the function 
$x\mapsto \ohi(x)$ given in (\ref{eq_ExplHGImm})
or (\ref{2}). For this, it turns out that it is sufficient to
complement
a full column rank Jacobian into an invertible matrix.

The drawback of this approach however is that, 
because it is  linked to particular coordinate systems, 
the obtained diffeomorphism may not be defined everywhere. Also, its image could be only a subset of the observer accessibility set (for $\ox$), namely  the trajectories of $\ox$ may leave  the image of the diffeomorphism or equivalently 
 the trajectories of $(\hat x,\hat w)$ may leave
the domain of definition of the diffeomorphism. We show in Section \ref{diffeo_ext} how this 
new problem can be overcome via an extension of the image of the  
diffeomorphism. The key point here is that
the given observer dynamics (\ref{LP3}) remain unchanged. Hence we deal with constraints on the observer 
state without any kind of projection/restriction as commonly proposed (see
\cite{MagPas,AstPra} for example). A benefit of this is that, to preserve the convergence property,
we do not require extra assumptions such as convexity .

To illustrate
our results, we continue the example of the harmonic oscillator with unknown frequency
and 
add one based on
the bioreactor presented in \cite{GauHamOth}. 
We use a high-gain observer as starting point.
But, as shown in \cite{BerPraAnd}, the same tools can be used with a nonlinear Luenberger observer.
%

Our contribution relies on, or is inspired by ideas of some known analysis results such as
continuously completing an independent set of vectors to a basis
\cite{Waz,Eck}, diffeotopies \cite{Hir} or $h$-cobordism \cite{Milnor}. We rephrase part of them when 
it is constructive and therefore useful for observer design. Similarly, the constructive part of 
our proofs are in the main body of our text,
 those which are not constructive and never used/commented in remarks or examples are in appendix or omitted 
 to save space. 
 \startjournal{A more complete version with all the proofs is in \cite{BerPraAndSIAMHal}.}
\stopjournal
\startlongue{
This is the long version of a paper which has been submitted for publication in SIAM Journal of Control and optimization.
The parts of the paper which are in blue are those which are modified with respect to the journal version.
}
\\
\stoplongue

\subsection{Problem statement}

We consider
the given system with
dynamics~:
\begin{equation}\label{syst}
\dot x = f(x)\quad, \qquad y=h(x)\ ,
\end{equation}
with $x$ in
$\RR^n$ and $y$ in $\RR^q$. Its
solution at time $t$, with initial condition $x_0$ at time $0$ is denoted $X(x_0,t)$ and the corresponding output $y_{x_0}(t)$.
The observation problem is to construct a dynamical system with input $y$ and output $\hat x$, supposed to be
an estimate of the system state $x$
as long as the latter
is in a specific set of interest denoted $\OuvEx\subseteq\RR^n$. 
\startmodif As starting point,  \stopmodif we
assume this problem is (formally) already solved but with maybe
some implementation issues
such as finding an expression of $\oh$.
More precisely,

\itshape \underline{Assumption $\hypo$} (Converging observer)~: There exist an open subset $\Ouvt $ of
$\RR^n$ containing $\OuvEx$, a $C^1$ injective immersion $\ohi:\Ouvt\to \RR^m$, and a set\footnote{%
The symbol $\setphitau$ is pronounced \textit{phitau}.
} $\setphitau$ of pairs $(\of,\oh)$ of
locally Lipschitz functions such that we have
\\[0.5em]\null \hfill $\displaystyle 
\oh (\ohi (x))\;=\; x
\qquad \forall x\in \OuvEx
$\hfill \null 
\refstepcounter{equation}\label{ObsDynLeftInverse}$(\theequation)$
\\[0.7em]
and, for any solution $X(x_0,t)$ of (\ref{syst}) which is defined
and remains in $\OuvEx$ for $t$ in $[0,+\infty )$, the solution $(X(x_0,t),\oX(\ox_0,t;y_{x_0}))$ of the cascade
system~:
\begin{equation}\label{eq_sysObs}
	\dot x \;=\; f(x)\quad ,\qquad
y\;=\; h(x)\quad ,\qquad 
	\dot \ox = \of (\ox,\hat x ,y)
\quad ,\qquad 
\hat x=\oh(\ox)
	\  ,
\end{equation}
with initial condition $(x_0,\ox_0)$ in $\OuvEx\times\RR^m$ at time 
$0$, is also defined on $[0,+\infty )$ and  satisfies~:
\begin{equation}
\label{9}
\lim_{t\to +\infty }
\left|\oX(\ox_0,t;y_{x_0})-\ohi(X(x_0,t))\right|\;=\; 0
\  .
\end{equation}
\normalfont
\par\vspace{0.5em}

\begin{remark}~\normalfont
\label{rem1}
\setcounter{enumi}{0}
\begin{list}{}{%
\parskip 0pt plus 0pt minus 0pt%
\topsep 0.5ex plus 0pt minus 0pt%
\parsep 0pt plus 0pt minus 0pt%
\partopsep 0pt plus 0pt minus 0pt%
\itemsep 0.5ex plus 0pt minus 0pt
\settowidth{\labelwidth}{1.}%
\setlength{\labelsep}{0.5em}%
\setlength{\leftmargin}{\labelwidth}%
\addtolength{\leftmargin}{\labelsep}%
}
\refstepcounter{enumi}\label{point4}\item[\theenumi.]
The convergence property given by (\ref{9}) is in the observer state space only. Property 
(\ref{ObsDynLeftInverse}) is a necessary condition for this convergence to be transferred from the observer state space to the system state space.
\refstepcounter{enumi}\label{point3}\item[\theenumi.]
The need for pairing $\of$ and $\oh$ comes from the dependence on $\hat x=\oh(\ox)$ of $\of$ in 
(\ref{eq_sysObs}).
This may imply to change $\of$ whenever we change $\oh$.
In the high-gain approach, as in (\ref{LP3}), when $\OuvEx$ is bounded, thanks to the 
gain $\ell$ 
which can be chosen arbitrarily large, $\of $ can be 
paired with any locally  Lipschitz function $\oh$
provided its values are saturated 
whenever they are used as arguments of $\of $. On another hand,
if, as in  (\ref{8}), $\of$ does not depend on $\hat x$, then it can be paired with any $\tau$.
\end{list}
\end{remark}

\begin{example}\normalfont
\label{ex_osci_0}
For System
(\ref{systOscil}),
for any solution with initial condition $x_1=x_2=0$, we 
have no information on $x_3$ from
the only knowledge of (\ref{systOscil}) and the function $t\mapsto 
y(t)=X_1(x,t)$. This explains the restriction of our attention to
the set 
\begin{equation}\label{eq_OuvsExpl}
\OuvEx\;=\; \left\{x\in\RR^3\,  :\: x_1^2+x_2^2 \in 
\left]\frac{1}{r},r\right[\: ,\;
x_3\in ]0,r[\right\}
\  ,
\end{equation}
where $r$ is some arbitrary strictly positive real number.
This set is invariant
by (\ref{systOscil}),
and  the 
function (\ref{eq_ExplHGImm})
being an injective immersion on 
$\left(\RR^2\setminus\{(0,0)\}\right)\times \RR_{>0}$,
the system is strongly differentially observable%
\footnote{
The system is said to be strongly differentially observable of order $m$
if the function $x\mapsto(h(x),L_fh(x),...,L_f^{m-1}h(x))$ is an injective immersion.
} of order 4 on this set. 
Let $\Ouvt$ be any open subset such that
$\CL(\OuvEx)\subset \Ouvt \subseteq
\left(\RR^2\times\RR_{>0}\right)
\setminus\left(\{(0,0)\}\times\RR_{>0}\right)$,
with $\CL$ denoting the set closure.
Then,
 $\CL(\OuvEx)$ being a compact set, a set $\setphitau$ 
satisfying Assumption $\hypo$ is made of
pairs of a locally Lipschitz function $\oh$ satisfying
(see 
\cite{KhaPra} for example)
\begin{equation}\label{eq_phitau}
x=\oh(x_1,x_2,-x_1x_3,-x_2x_3)
\qquad \forall x\in \OuvEx
\end{equation}
and the
function $\of $ defined in
(\ref{LP3}) where
\begin{equation}
\label{LP13}
\sat(s)\;=\; \min\left\{r^3,\max\left\{s,-r^3\right\}\right\} 
\end{equation}
with the gain
$\ell$ in (\ref{LP3})
adapted to the properties of $\oh$.
\hfill$\triangle$\end{example}

Although the problem of observer design seems already solved under Assumption $\hypo$, it can be difficult to find a left-inverse $\oh$ of $\ohi$. In the following, we consider that the function $\ohi$ and the set 
$\setphitau$ are given and we aim at avoiding the left-inversion of $\ohi$ by expressing the observer for $x$ in 
the, maybe augmented, $x$-coordinates. 
More precisely we aim at solving the following problem.

\itshape \underline{Our problem} (Observer in the $x$-coordinates) ~:
 Assume that Assumption $\hypo$ is satisfied, we wish to find an open set  $\Ouvxw\subseteq\RR^{m}$ and two mappings $k$ and $\ell$ such that the system defined in $\RR^{m}$
\begin{equation}\label{ObsOriCoord}
\dot {\hat x} =  k(\hat x,\hat w,y)\ ,\ \dot {\hat w} = \ell(\hat x,\hat w,y)\ ,
\end{equation}
defines an observer in $\OuvEx$.
In other words, for any initial condition $x_0$ in $\OuvEx$ such that the solution $X(x_0,t)$ of (\ref{syst}) is defined and remains in $\OuvEx$ for $t$ in $[0,+\infty )$, the solution $(X(x_0,t),\hat X(\hat x_0,\hat w_0,t;y_{x_0}),\hat W(\hat x_0,\hat w_0,t;y_{x_0}))$, with initial condition $(\hat x_0,\hat w_0)$ in $\Ouvxw$, of the cascade of system (\ref{syst}) with the observer (\ref{ObsOriCoord})
 is also defined on $[0,+\infty )$ and satisfies~: 
 \begin{equation} \label{ConvOrCoord} \lim_{t\to +\infty }  \left|X(x_0,t)-\hat X(\hat x_0,\hat w_0,t;y_{x_0})\right| = 0 \ . 
 \end{equation} 
\normalfont
\par\vspace{0.5em}
 
\subsection{A sufficient condition allowing us to express the 
observer in  the  given $x$-coordinates}
For the simpler 
case where the raw observer state $\ox$ has the same dimension as the 
system state $x$, i.e. $m=n$,
$\ohi$, in Assumption $\hypo$, 
is a diffeomorphism on $\Ouvt$ and we can express the observer in the  given $x$-coordinates as~:
\begin{equation}
\dot{\hat{x}} = \left(
\frac{\partial \ohi}{\partial x}(\hat x)
\right)^{-1}
\of (\ohi(\hat x),\hat{x},y) 
\label{ObsDynDiffeo}
\end{equation}
which requires a Jacobian inversion only. However, although,
by assumption, the system trajectories remain in $\Ouvt$ where the Jacobian
is invertible, we have no guarantee the
ones of the observer do.
Therefore, to obtain convergence and completeness of solutions, we must find means to ensure the estimate $\hat{x}$
does not leave the set $\Ouvt$, 
or equivalently that $\ohi(\hat{x})$ remains in the image set $\ohi(\Ouvt)$.
We address this point by modifying $\ohi$ ``marginally'' in order to get $\ohi(\Ouvt)=\RR^m$.

In the more complex situation where $m>n$, $\ohi$ is only an injective immersion.
In \cite{AndEytPra}, it is proposed to 
augment the given $x$-coordinates in $\RR^n$ with extra ones, say $w$, in $\RR^{m-n}$ and correspondingly to augment
 the given injective immersion 
$\ohi$ into a diffeomorphism $\ohie : \Ouvxw \rightarrow \RR^m$, 
where $\Ouvxw$ is an open  subset of $\RR^m$, considered as an augmentation of $\Ouvt$, i.e. its
Cartesian projection on $\RR^n$ is contained in $\Ouvt$ and contains $\CL(\OuvEx)$.

To help us find such an appropriate augmentation, we have the following 
sufficient condition.

\begin{proposition}\label{prop_CSobsWithoutInversion} 
Assume Assumption $\hypo$ holds and $\OuvEx$ is bounded. Assume also the existence of an
open subset $\Ouvxw$ of $\RR^m$ containing $\CL(\OuvEx\times\{0\})$ and of a  diffeomorphism $\ohie:\Ouvxw\to \RR^m$ satisfying 
\\[0.3em]\null \hfill $\displaystyle 
 \ohie(x,0)\;=\; \ohi(x)\qquad \forall x\in\OuvEx  
$\hfill \null 
\refstepcounter{equation}\label{LP15}$(\theequation)$
\\[0.3em]
 and 
\\[0.3em]\null \hfill $\displaystyle 
\ohie(\Ouvxw)=\RR^m
\  .
$\hfill \null 
\refstepcounter{equation}\label{n9}$(\theequation)$
\\[0.7em]
 and such that, with letting $\ohede{x}$ denote the $x$-component of the inverse of $\ohie$, there exists a 
 function $\of  $ such that the pair $(\of  ,\ohede{x})$ is in the set $\setphitau$ given by Assumption $\hypo$. 
 Under these conditions, for any initial condition $x_0$ in $\OuvEx$ such that the solution $X(x_0,t)$ of (\ref{syst}) is defined and remains in $\OuvEx$ for $t$ in $[0,+\infty )$, the solution $(X(x_0,t),\hat X(\hat x_0,\hat w_0,t;y_{x_0}),\hat W(\hat x_0,\hat w_0,t;y_{x_0}))$, with initial condition $(\hat x_0,\hat w_0)$ in $\Ouvxw$, of the cascade of system (\ref{syst}) with the observer~: 
 \begin{equation}\label{eq_RealObsDyn} 
 \dot {\overparen{ \left[ \begin{array}{c} \hat x\\ \hat w \end{array} \right] }} =\left( \frac{\partial \ohie}{\partial (\hat x,\hat w)}(\hat x,\hat w) \right)^{-1} \of (\ohie(\hat x,\hat w), \hat x,y) 
 \end{equation} 
 is also defined on $[0,+\infty )$ and satisfies~: 
 \begin{equation} \label{1} \lim_{t\to +\infty } \left|\hat W(\hat x_0,\hat w_0,t;y_{x_0})\right| + \left|X(x_0,t)-\hat X(\hat x_0,\hat w_0,t;y_{x_0})\right| = 0 \ . 
 \end{equation} 
 \end{proposition}
The key point in the observer (\ref{eq_RealObsDyn}) is that, instead of left-inverting the
function $\ohi$ via $\oh$ as in (\ref{ObsDynLeftInverse}), we invert only a
matrix. 
\begin{proof}
See Appendix \ref{app_CSobsWithoutInversion}.
\hfill\end{proof}

With Proposition \ref{prop_CSobsWithoutInversion}, we are left with finding
a diffeomorphism $\ohie$ satisfying the conditions listed in the statement~:
\begin{list}{}{%
\parskip 0pt plus 0pt minus 0pt%
\topsep 1ex plus 0pt minus 0pt%
\parsep 0pt plus 0pt minus 0pt%
\partopsep 0pt plus 0pt minus 0pt%
\itemsep 1ex plus 0pt minus 0pt
\settowidth{\labelwidth}{$\bullet$}%
\setlength{\labelsep}{0.5em}%
\setlength{\leftmargin}{\labelwidth}%
\addtolength{\leftmargin}{\labelsep}%
}
\item[$\bullet$]
Equation  (\ref{LP15}) is about the fact that $\ohie$ is an 
augmentation, with adding coordinates,
of the given injective immersion $\ohi$.
It motivates the following problem.

\begin{problem}[Immersion augmentation into a diffeomorphism]
\label{defP1}
Given a set $\OuvEx $, an open subset $\Ouvt$ of $\RR^n$ containing $\CL(\OuvEx)$, and  an injective immersion
$\ohi:\Ouvt\rightarrow \ohi(\Ouvt)\subset\RR^m$, the pair $(\ohia,\Ouvxw)$  is said to solve the problem of
immersion augmentation into a diffeomorphism if $\Ouvxw$ is an open subset of 
$\RR^m$ containing $\CL(\OuvEx\times \{0\} )$ and
$\ohia:\Ouvxw\rightarrow\ohia(\Ouvxw)\subset\RR^m$ is a diffeomorphism satisfying
$$
 \ohia(x,0)\;=\; \ohi(x)\qquad \forall x\in\OuvEx \ .
$$  
\end{problem}
We will present
in Section \ref{jac_comp} conditions under which Problem \ref{defP1}
 can
be solved via
complementing a full column rank Jacobian of $\ohi$ into an invertible matrix, i.e. via what we call
Jacobian complementation.

\item[$\bullet$]
The condition expressed in (\ref{n9}), is about the fact that $\ohie$ is 
surjective onto $\RR^m$. This motivates us to introduce the surjective diffeomorphism extension problem%
\begin{problem}[Surjective diffeomorphism extension]
\label{defP2}
Given an open subset $\OuvDef$ of $\RR^m $, a compact subset $K$
of $\OuvDef$, and a diffeomorphism $\ohia$: $\OuvDef  \to  \RR^m$, the diffeomorphism
$\ohie : \OuvDef \rightarrow \RR^m$ is said to solve the surjective diffeomorphism extension problem if it satisfies
$$\ohie(\OuvDef)=\RR^m
\quad ,\qquad 
\ohie(\varchi)=\ohia(\varchi)\quad \forall \varchi \in K.
$$
\end{problem}
 This Problem \ref{defP2} will be addressed
 in Section \ref{diffeo_ext}.
\end{list}

When Assumption $\hypo$ holds and $\OuvEx$ is bounded, by successively solving Problem \ref{defP1} and
Problem \ref{defP2} with $\CL(\OuvEx\times \{0\} )\subset K\subset \Ouvxw$, we get a diffeomorphism
$\ohie$ guaranteed to satisfy all the conditions of Proposition \ref{prop_CSobsWithoutInversion} except maybe the fact that the pair
$(\of  ,\ohede{x})$ is in $\setphitau$.  How this last condition can be satisfied will be discussed in Section
\ref{sec1} mainly via a list of remarks.

Throughout Sections \ref{jac_comp}-\ref{diffeo_ext}, 
we will show how, step by step, we can express a high gain observer 
in the $x$-coordinates for the harmonic oscillator 
with unknown frequency. We will also show that our approach enables to ensure completeness of solutions of the observer presented in \cite{GauHamOth} for the bioreactor. The various difficulties we shall encounter on 
this road will be discussed in Section 
\ref{disc_P3}. In particular, we shall see how they 
can be overcome thanks to a
better choice  of $\ohi$ and of the pair $(\of  ,\tau)$ given by Assumption 
$\hypo$. 


\section{About  Problem \ref{defP1} : Augmentation of an immersion into a diffeomorphism
\jacobcomplet
}~
\label{jac_comp}
In \cite{AndEytPra}, we find the following sufficient condition for
the augmentation of an immersion into a diffeomorphism.

\begin{lemma}[\cite{AndEytPra}]
\label{lem_CS_P1}
Let $\OuvEx $ be a bounded set, $\Ouvt$ be an open subset of $\RR^n$ containing $\CL(\OuvEx)$, and  
$\ohi:\Ouvt\rightarrow \ohi(\Ouvt)\subset\RR^m$ be an injective immersion. If there exists
a bounded open set $\Ouvs$ satisfying
$\displaystyle 
\CL(\OuvEx) \subset \Ouvs \subset \CL(\Ouvs) \subset \Ouvt
$ 
and a $C^1$ function $\gamma:\Ouvt\to \RR^{m\times(m-n)}$ 
the values of which are $m\times(m-n)$ matrices satisfying~:
\\[0.1em]\null \hfill $\displaystyle 
\det\left(
\frac{\partial \ohi}{\partial x}(x)
\quad
\gamma (x)
\right)\neq 0
\qquad \forall x \in \CL(\Ouvs)\  ,
$\refstepcounter{equation}\label{eq_MatComp}\hfill$(\theequation)$\\[0.7em]
then there exists
a strictly positive real number $\varepsilon $
such that the following pair\footnote{For a positive real number
$\varepsilon$
and $z_0$ in $\RR^p$, $\BR_\varepsilon(z_0)$ is the
open ball centered at $z_0$ and with radius
$\varepsilon$.
}  $(\ohia,\Ouvxw)$  solves Problem \ref{defP1}
\begin{equation}
\label{ohie}
\ohia(x,w)\;=\; \ohi(x) \;+\;
 \gamma (x)\,   w
\ ,\ \Ouvxw=\Ouvs\times\BR_{\varepsilon}(0)
\  .
\end{equation}
\end{lemma}
\indent
In other words, 
an injective immersion $\ohi$ can be augmented into a 
diffeomorphism $\ohia$
if we are able
to find $m-n$ columns $\gamma$ which are $C^1$ in $x$ and which
complement the full column rank Jacobian $\frac{\partial \ohi}{\partial x}(x)$ into an invertible matrix.

\begin{proof}
See Appendix \ref{app_CS_P1}.
\hfill\end{proof}

\begin{remark}
\label{rem_jacCompOrth}
\normalfont
Complementing a $m\times n$ full-rank matrix into an invertible one is equivalent to finding
$m-n$ independent vectors orthogonal to that matrix. Precisely the existence of $\gamma$
satisfying (\ref{eq_MatComp}) is equivalent to 
the existence of a $C^1$ function $\tilde \gamma:\CL(\Ouvs)\to \RR^{m\times(m-n)}$ the
values of which are full rank matrices
satisfying~:
\begin{equation}
\label{3}
\tilde \gamma (x)^\top
\frac{\partial \ohi}{\partial x}(x)
\;=\;  0
\qquad \forall x \in \CL(\Ouvs)\  .
\end{equation}
Indeed, $\tilde{\gamma}$ satisfying (\ref{3}) satisfies also (\ref{eq_MatComp}) since the following matrices 
are invertible
$$
\left(\begin{array}{c}
\frac{\partial \ohi}{\partial x}(x)^\top
\\
\tilde \gamma (x)^\top
\end{array}\right)
\left(
\frac{\partial \ohi}{\partial x}(x)
\quad
\tilde \gamma (x)
\right)\;=\; 
\left(\begin{array}{cc}
\frac{\partial \ohi}{\partial x}(x)^\top
\frac{\partial \ohi}{\partial x}(x)
& 0
\\
0 & \tilde \gamma (x)^\top
\tilde \gamma (x)
\end{array}\right)
\  .
$$
Conversely, given $\gamma $ satisfying
(\ref{eq_MatComp}), $\tilde \gamma $ defined by the identity below
satisfies (\ref{3}) and has full column rank
$$
\left(
\frac{\partial \ohi}{\partial x}(x)
\quad
\tilde \gamma (x)
\right)\;=\; 
\left(
\frac{\partial \ohi}{\partial x}(x)
\quad
 \gamma (x)
\right)
\left(\begin{array}{c@{\qquad }c}
I & -\left[\frac{\partial \ohi}{\partial x}(x)^\top
\frac{\partial \ohi}{\partial x}(x)\right]^{-1}\!\frac{\partial \ohi}{\partial x}(x)^\top
\gamma (x)
\\
0 & I
\end{array}\right) \ .
$$
\end{remark}



\subsection{Submersion case}~
 \label{sec2}
\begin{proposition}[%
Completion when $\ohi(\CL(\Ouvs))$ is a level set of a submersion%
]
\label{Prop_Subm}
Let $\OuvEx $ be a bounded set, $\Ouvs$ be a bounded open set and $\Ouvt$ be an open  set
satisfying
$$
\CL(\OuvEx) \subset \Ouvs \subset \CL(\Ouvs) \subset \Ouvt
\  .
$$
Let also 
$\ohi:\Ouvt\rightarrow \ohi(\Ouvt)\subset\RR^m$ be an injective immersion.
Assume there exists a $C^2$ function $F$ : $\RR^m\rightarrow 
\RR^{m-n}$ which is a submersion at least
on a neighborhood of $\ohi(\Ouvs)$ satisfying:
\begin{equation}
\label{LP6}
F(\ohi(x))=0\qquad \forall x \in \Ouvs\  ,
\end{equation}
 then, with the $C^1$ function
$x\mapsto \gamma(x) = \frac{\partial F}{\partial \xi}^T(\ohi(x))$,
the matrix in (\ref{eq_MatComp}) is invertible for all $x$ in $\Ouvs$
and
the pair $(\ohia,\Ouvxw)$ defined  in (\ref{ohie})
 solves Problem \ref{defP1}.
\end{proposition}
\begin{proof}
For all $x$ in $\CL(\Ouvs)$,  $\frac{\partial \ohi}{\partial x}(x) $ 
is right invertible and 
we have
$
\frac{\partial F}{\partial \xi}(\ohi(x)) \frac{\partial 
\ohi}{\partial x}(x) =0
$.
Thus,
the rows
 of $\frac{\partial F}{\partial \xi}(\ohi(x))$ are orthogonal to the column vectors of $\frac{\partial \ohi}{\partial x}(x)$
and are independent since $F$ is a submersion.
The Jacobian of $\ohi$ can therefore be completed with $\frac{\partial F}{\partial \xi}^T(\ohi(x))$.
The proof is completed with Lemma \ref{lem_CS_P1}.
\hfill\end{proof}
 
\begin{remark}
\normalfont
Since $\frac{\partial \ohi}{\partial x}$ is of constant rank $n$ on 
$\Ouvt$, the existence of such a function $F$ is guaranteed at least locally 
by the constant rank Theorem. 
\end{remark}

\begin{example}[Continuation of Example \ref{ex_osci_0}]\normalfont
\label{ex_osci_1}
Elimination of the $\hat x_i$ in the $4$ equations given by the injective immersion $\ohi$ defined in (\ref{eq_ExplHGImm})
leads to the function 
$
F(\xi)=\xi_2 \xi_3 -\xi_1 \xi_4 
$
satisfying (\ref{LP6}). It follows that a candidate for 
complementing:
\\[0.7em]\null \hfill $\displaystyle 
\frac{\partial \ohi}{\partial x}(x) = 
\left(
\begin{array}{ccc}
1&0&0\\
0&1&0\\
-x_3 &0 & -x_1\\
0&-x_3&	-x_2
\end{array}
\right) 
$\refstepcounter{equation}\label{LP7}\hfill$(\theequation)$\\
is
\null \hfill $\displaystyle 
\gamma (x)\;=\; 
\frac{\partial F}{\partial \xi}(\ohi(x)) ^T= ( x_2x_3, -x_1 x_3, x_2, 
-x_1) ^T\ .
$\hfill \null \\[0.7em]
This vector is nothing but the column of the minors of 
the matrix (\ref{LP7}).
It gives as determinant 
$
(x_2x_3)^2+(x_1x_3)^2+x_2^2+x_1^2
$
which is never zero on $\Ouvt$. 

Then, it follows from
Lemma
 \ref{lem_CS_P1}, that, for any bounded open set $\Ouvs$ such that
$\OuvEx\subset\CL(\Ouvs)\subset\Ouvt$ the following function
is a diffeomorphism on $\Ouvs\times\BR_\epsilon(0)$ for $\varepsilon$ sufficiently small
\\[0.7em]\null \hfill $\displaystyle 
\ohia(x,w)=(x_1+x_2x_3w,x_2-x_1x_3w,-x_1x_3 +x_2w,-x_2x_3-x_1 w) \  .
$\hfill \null \\[1em]
\indent
With picking 
$\ohie=\ohia$, (\ref{eq_RealObsDyn}) gives us the following observer written in the
given
 $x$-coordinates
augmented
 with $w$ :
\\[0.7em]\null \hfill $\displaystyle 
\dot {\overparen{
\left(
\begin{array}{@{}c@{}}
\hat x_1\\\hat x_3\\
\hat x_2\\
\hat w
\end{array}
\right)
}}=\!\!
\left(\begin{array}{@{}cccc@{}}
1 & \hat x_3 \hat w & \hat x_2 \hat w & \hat x_2 \hat x_3
\\
-\hat x_3 \hat w & 1 & -\hat x_1 \hat w  & -\hat x_1 \hat x_3
\\
-\hat x_3 & \hat w & -\hat x_1 & \hat x_2
\\
-\hat w &-\hat x_3 & -\hat x_2 & -\hat x_1
\end{array} \right)^{\hskip -0.7em -1}
\left[\!
\left(\begin{array}{@{}c@{}}
\hat x_2- \hat x_1 \hat x_3 \hat w
\\
-\hat x_1\hat x_3 +\hat x_2\hat w
\\
-\hat x_2\hat x_3-\hat x_1 \hat w
\\
\sat(\hat x_1\hat x_3^2)
\end{array}\right)
\!+\!
\left(\begin{array}{@{}c@{}}
\ell k_1 \\ \ell^2 k_2 \\ \ell^3 k_3 \\ \ell ^4 k_4
\end{array}\right)[y-\hat x_1]
\right]
$\hfill \null \\[0.7em]
Unfortunately the matrix to be inverted is non singular for
$(\hat x , \hat w )$ in $\Ouvs\times\BR_\varepsilon(0)$ only and we 
have no guarantee that the trajectories of this observer remain in this set. 
This shows that a further modification transforming $\ohia$ into $\ohie$
is needed to make sure that $\ohie^{-1}(\ox)$ belongs to this 
set whatever $\ox$ in $\RR^4$.
This is Problem \ref{defP2}.
\hfill$\triangle$\end{example}

The drawback of this Jacobian 
complementation method  is 
that it asks for the knowledge of the function $F$. It would be 
better to simply have a universal formula relating the entries of the 
columns to be added to those of $\frac{\partial \ohi}{\partial x}$.

\subsection{The $\tilde P[m,n]$ problem}
\label{jac_comp_priv}
Finding a universal formula for the Jacobian 
complementation problem
\jacobcomplet
amounts to solving the following problem.
\begin{definition} $(\tilde P[m,n]$ problem)
For a pair of integers $(m,n)$ such that $0<n<m$, a  $C^1$ matrix function $\tilde{\gamma}:\RR^{m\times n} \to \RR^{m\times(m-n)}$ 
solves the $\tilde P[m,n]$ problem if for any $m  \times  n$ matrix
$\vartau=(\vartau_{ij})$
of rank $n$, 
the matrix $
\left(\begin{array}{@{}cc@{}} 
\vartau
& \tilde{\gamma}(
\vartau
)
\end{array}\right)
$
is invertible,
or equivalently,
the matrix $ \tilde{\gamma} (\vartau  )$ has rank $m-n$ and satisfies
$\   \tilde{\gamma} ( \vartau )^\top \vartau
\;=\; 0 \ .
$
\end{definition}

As a consequence of a theorem due to Eckmann \cite[\S 1.7 p. 126]{Eck} and Lemma \ref{lem_CS_P1}, we have
\begin{theorem}\label{theo_EckmannTilde}
The $\tilde P[m,n]$
problem is solvable by a $C^1$ function $ \tilde{\gamma}$ if and only if the pair $(m,n)$ is
one of the following $3$ pairs
\\[0.5em]\null \hfill $\displaystyle 
(>2,m-1)$ \hfill or\hfill $(4,1)$\hfill or\hfill $(8,1)\  .$\hfill 
\refstepcounter{equation}\label{LP11}\hfill$(\theequation)$
\\[0.5em]
Moreover, for each of 
these pairs and for
any bounded set $\OuvEx $, bounded open set $\Ouvs$ and open  set $\Ouvt$
satisfying
\\[0.7em]\null \hfill $\displaystyle 
\CL(\OuvEx) \subset \Ouvs \subset \CL(\Ouvs) \subset \Ouvt
\  ,
$\hfill \null \\[0.7em]
and any injective immersion
$\ohi:\Ouvt\rightarrow \ohi(\Ouvt)\subset\RR^m$,
the pair $(\ohia,\Ouvxw)$ defined in (\ref{ohie})
with $\gamma(x) = \tilde \gamma\left(\frac{\partial \ohia}{\partial x}(x)\right)$ solves Problem 
\ref{defP1}.\\
\end{theorem}
\startjournal{
\indent{\it Proof only if}. \ignorespaces
This is a direct consequence of Remark \ref{rem_jacCompOrth}, of the facts that if $\tilde{P}[m,n]$ has a solution, $\tilde{P}[m-1,n-1]$ must have one, and that the only parallelizable spheres are $\mathbb{S}^1$, $\mathbb{S}^3$ and $\mathbb{S}^7$ (see \cite{BotMil})
 and of
\begin{theorem}[{\cite[\S 1.7 p. 126]{Eck}}]	
For $m>n$, there exists a continuous function $\vartau \in\RR^{m \, \times \, n} \mapsto \tilde 
\gamma_1(\vartau)\in \RR^m$ with non zero values 
and satisfying
\\[0.6em]\null \hfill $\displaystyle 
\tilde \gamma_1(\vartau)^T \vartau=0
\qquad \forall \vartau\in\RR^{m \, \times \, n}:\,  
\textsf{Rank}(\vartau)=n
$\hfill \null \\[0.6em]
if and only if  $(m,n)$ is in 
one of the following $4$ pairs
\\[0.7em]
\null \hfill $(\geq 2,m-1)$
\hfill or\hfill 
$(\textrm{even},1)$
\hfill or\hfill 
$(7,2)$
\hfill or\hfill 
$(8,3)$\hfill \null 
\vbox{\hrule height0.6pt\hbox{%
\vrule height1.3ex width0.6pt\hskip0.8ex
\vrule width0.6pt}\hrule height0.6pt
}\\
\end{theorem}
A detailed version of the proof can be found in \cite{BerPraAndSIAMHal}.
}\stopjournal

\startlongue{
\indent{\it Proof only if}. \ignorespaces
See Appendix \ref{app_EckmannTilde}.
\hfill \vbox{\hrule height0.6pt\hbox{%
\vrule height1.3ex width0.6pt\hskip0.8ex
\vrule width0.6pt}\hrule height0.6pt
}
\\
}\stoplongue
\indent{\it Proof  if}. \ignorespaces
For $(m,n)$ equal to $(4,1)$ or $(8,1)$ respectively, possible solutions are
$$
\tilde\gamma(\vartau)
=
\left(
\begin{array}{@{}ccc@{}}
-\vartau_2 & \vartau_3 & \vartau_4
\\
\vartau _1 & -\vartau _4 & \vartau _3
\\
-\vartau _4 & -\vartau _1 & -\vartau _2
\\
\vartau _3 & \vartau _2 & -\vartau _1
\end{array}
\right) 
,\  
\tilde\gamma(\vartau)=\left(
\begin{array}{@{}ccccccc@{}}
\vartau_2 & \vartau_3 & \vartau_4    & \vartau _5 & \vartau_6 & \vartau_7 & \vartau_8
\\
-\vartau _1 & \vartau _4 & -\vartau _3 & \vartau _6 & -\vartau _5 & -\vartau _8 & \vartau _7
\\
-\vartau _4 & -\vartau _1 & \vartau _2 & \vartau _7 & \vartau _8 & -\vartau _5 & -\vartau _6 
\\
\vartau _3 & -\vartau _2 & -\vartau _1 & \vartau _8 & -\vartau _7 & \vartau _6 & -\vartau _5 
\\
-\vartau_6 & -\vartau_7 & -\vartau_8 & -\vartau _1 & \vartau_2 & \vartau_3 & \vartau_4
\\
\vartau _5 & -\vartau _8 & \vartau _7 & -\vartau _2 & -\vartau _1 & -\vartau _4 & \vartau _3
\\
\vartau _8 & \vartau _5 & -\vartau _6 & -\vartau _3 & \vartau _4 & -\vartau _1 & -\vartau _2
\\
-\vartau _7 & \vartau _6 & \vartau _5 & -\vartau _4 & -\vartau _3 & \vartau _2 & -\vartau _1
\end{array}
\right)
$$
where  $\vartau_j$ is the $j$th component of the vector $\vartau$. For $n=m-1$, we have the identity
\\[0.5em]\null \hfill $\displaystyle 
\det \left(
\vartau 
\quad
\tilde\gamma (\vartau) \right)= \sum_{j=1}^{m} \tilde\gamma_j(\vartau_{ij}) \, M_{j,m}(\vartau_{ij})
$\hfill \null \\[0.5em]
where $\tilde\gamma_j$ is the $j$th component of the vector-valued function $\tilde\gamma$ and the $M_{j,m}$, being the cofactors of $\left(
\vartau \  \tilde\gamma (\vartau) \right)$ computed along the last column, are polynomials in the given components $\vartau_{ij}$.
At least one of the $M_{j,m}$ is 
non-zero (because they are minors of dimension $n$ of $\vartau$ which is full-rank). So it is sufficient to take $\tilde\gamma_j(\vartau_{ij})=M_{j,m}(\vartau_{ij})$.
\hfill \vbox{\hrule height0.6pt\hbox{%
\vrule height1.3ex width0.6pt\hskip0.8ex
\vrule width0.6pt}\hrule height0.6pt
}

In the following example we show how by exploiting some structure we can reduce the problem to one of these 
$3$ pairs.


\begin{example}[Continuation of Example \ref{ex_osci_1}]\normalfont
\label{ex_osci_2}
In Example \ref{ex_osci_1}, we have complemented
 the Jacobian 
(\ref{LP7}) with the gradient of a submersion and  observed that 
the components of this gradient are actually cofactors. We now know that 
this is consistent with the case $n=m-1$.
But we can also take advantage from the upper triangularity of the 
Jacobian (\ref{LP7}) and complement
only the vector $(-x_1,-x_2)$ by for instance $(x_2,-x_1)$. 
The 
corresponding vector $\gamma $ is
$
\gamma(x) = (0,0,x_2,-x_1).
$
Here again, with 
Lemma
 \ref{lem_CS_P1}, we know that,
for any bounded open set $\Ouvs$ such that
$\CL(\OuvEx)\subset \Ouvs\subset\CL(\Ouvs)\subset\Ouvt$
the function
\\[0.1em]\null \hfill $\displaystyle 
\ohia(x,w)\;=\; \left(x_1\,  ,\: x_2\,  ,\: -x_1x_3 + x_2 w\,  ,\:
-x_2x_3-x_1 w\right)
$\hfill \null \\[0.7em]
is a diffeomorphism on $\Ouvs\times\BR_\epsilon(0)$.  In fact, in this particular case $\varepsilon$ can be
arbitrary, no need for it to be small.  However, the singularity at $\hat{x}_1=\hat{x}_2=0$ remains and
equation (\ref{n9})  is still not satisfied.
\hfill$\triangle$\end{example}

Given the very small number of cases where a universal formula exists, we now
look for a more general solution to the Jacobian complementation problem.

\subsection{Wazewski theorem}
\label{jac_comp_waz}

Historically, the Jacobian 
complementation problem
\jacobcomplet
was first addressed by Wazewski (see \cite{Waz}).
His formulation was~:
\\ 
\textit{Given $m n$ continuous functions $\vartau_{ij} : \Ouvt\subset\RR^n \rightarrow \RR$,
look for $m(m-n)$ continuous functions $\gamma _{kl} : \Ouvt \to \RR$ such
that the following matrix is invertible for all $x$
in $\Ouvt$~:}
\begin{equation}
\label{eqWaz}
P(x)\;=\; \left(\begin{array}{@{}cc@{}}\vartau (x) & \gamma (x)
\end{array}\right)
\  .
\end{equation}
The difference with the previous section, is that here, we look for continuous functions $\gamma$ of $x$ in $\RR^n$ instead of continuous functions $\gamma$ of $\vartau$ in $\RR^{m\times n}$.

Wazewski established that this
other version of the
 problem admits a far more general solution ~:

\begin{theorem}[{\cite[Theorems 1 and 3]{Waz} and
    \cite[page 127]{Eck}}]\label{theo_Waz}
If $\Ouvt$, equipped with the subspace topology of
$\RR^n$, is a contractible space, then there exists a
$C^\infty $
 function
$\gamma$ making  the matrix $P(x)$ in (\ref{eqWaz}) invertible for all $x$ in $\Ouvt$.
\end{theorem}

The reader is referred to \cite[page 127]{Eck} or \cite[pages 406-407]{Dug} and to
\cite[Theorems 1 and 3]{Waz}
for the complete proof of existence of a continuous function $\gamma $.
\startjournal{
It can be made smoother by using a partition of unity (see \cite{BerPraAndSIAMHal}).
}\stopjournal
We give the main constructive points of this proof 
below.
\startlongue{
Also, in appendix \ref{sec4}, we show, by using a partition of unity, how this continuous function $\gamma$ 
making $P$ invertible can be modified into a smoother one giving the 
same invertibility property.
}\stoplongue
But before this,
let us give the following corollary obtained as a 
consequence of Lemma  \ref{lem_CS_P1}.
\begin{corollary}
Let $\OuvEx $ be a bounded set, $\Ouvt$ be an open subset of $\RR^n$ containing $\CL(\OuvEx)$
and which, equipped with the subspace topology of
$\RR^n$, is a contractible space. Let also
$\ohi:\Ouvt\rightarrow \ohi(\Ouvt)\subset\RR^m$ be an injective immersion.
There exists a $C^1$ function $\gamma$ such that, for any
bounded open set $\Ouvs$ satisfying
$$
\CL(\OuvEx) \subset \Ouvs \subset \CL(\Ouvs) \subset \Ouvt
$$
we can find a strictly positive real number $\varepsilon $ such that
 the pair $(\ohia,\Ouvxw)$ defined in
(\ref{ohie}) solves Problem \ref{defP1}.
\end{corollary}

\textit{About the construction of $\gamma $:}
The proof of Theorem \ref{theo_Waz} given by Wazevski
is based on Remark \ref{rem_jacCompOrth},
 noting 
that, if we have the decomposition
$$
\vartau(x) = \left(
\begin{array}{c}
	A(x) \\
	B(x)
\end{array}
\right)
$$
with $A(x)$ invertible on
some given subset $\R$ of
$\Ouvt$, then 
$$
\gamma(x)= \left(
\begin{array}{c}
	C(x) \\
	D(x)
\end{array}
\right)
$$
satisfies \eqref{3}
 on $\R$ if and only if
$D(x)$ is invertible on $\R$ and we have
\begin{equation}
\label{eq_C_waz}
C(x) = - ( A^T(x) )^{-1} B(x) ^T D(x)\qquad \forall x\in\R\  .
\end{equation}
Thus, $C$ is imposed by the choice of $D$ and choosing $D$ invertible is enough to build $\gamma$ on $\R$.

Also, if we already have a candidate
$$
P(x)=\left(
\begin{array}{cc}
	A(x) & C_0(x) \\
	B(x) & D_0(x) 
\end{array}
\right)
$$
on a boundary $\partial\R$ of $\R$, then, necessarily, if $A(x)$ is 
invertible for
all $x$ in $\partial \R$, then
$D_0(x)$ is invertible and
$C_0(x) = - ( A^T(x) )^{-1} B(x) ^T D_0(x)$ all $x$ in $\partial \R$.  Thus,
to extend the construction of a continuous function $\gamma $ inside $\R$
from its knowledge on the boundary $\partial\R$,
it suffices to
pick $D$ as any invertible matrix satisfying
 $D=D_0$ on $\partial\R$.
Because we
can propagate continuously $\gamma $ from one boundary to the other,
Wazewski deduces from these two observations that, it is
sufficient to partition the set $\Ouvt$ into adjacent sets $\R_i$
where a given $n \times n$ minor $A_i$ is invertible.  This is
possible since $\vartau$
is full-rank on $\Ouvt$. When $\Ouvt$ is a parallelepiped, he shows that there
exists an ordering of the $\R_i$ such that the continuity of each $D_i$ can be
successively ensured. We illustrate this
construction  in Example \ref{ex-Waz} below.

\par\vspace{1em}
\begin{example}\normalfont
\label{ex-Waz}
Consider the function
$$
\vartau(x) \; = \;
\left(\begin{array}{ccc}
    1    &    0     &   0
\\
    0    &    1     &   0
\\
   -x_3   &    0     &  -x_1
\\
    0    &   -x_3    &  -x_2
\\
\frac{\partial \wp }{\partial x_1} {{x_3}}&
\frac{\partial \wp }{\partial x_2} {{x_3}}& \wp 
\end{array}\right) 
\quad ,\qquad 
\wp (x_1,x_2)\;=\; \max\left\{ 0 ,  \frac{1}{r^2} - (x_1^2 + x_2^2)  
\right\}^4\, .
$$
$\vartau(x)$ has full rank 3 for any $x$ in $\RR^3$, since $\wp (x_1,x_2) \neq 0$ when $x_1=x_2=0$.
To follow Wazewski's 
construction, let $\delta$ be a strictly positive real number and consider the following
$5$ regions of $\RR^3$ (see Figure \ref{fig_waz})
\begin{eqnarray*}
&\displaystyle 
\R_1\;=\; ]-\infty,-\delta]\times\RR^2
\quad ,\qquad 
\R_2\;=\; [-\delta,\delta]\times[\delta,+\infty]\times\RR,
\\
&\displaystyle 
\R_3\;=\; [-\delta,\delta]^2\times\RR
\quad ,\qquad 
\R_4\;=\; [-\delta,\delta]\times[-\infty,-\delta]\times\RR
\quad ,\qquad
\R_5\;=\; [\delta,+\infty[\times\RR^2.
\end{eqnarray*}
 We select $\delta$ sufficiently small in such a way that 
 $\wp$ is not $0$ in $\R_3$.
 
\begin{figure}[h]
  	\centering
  	\def\svgwidth{6cm}
  	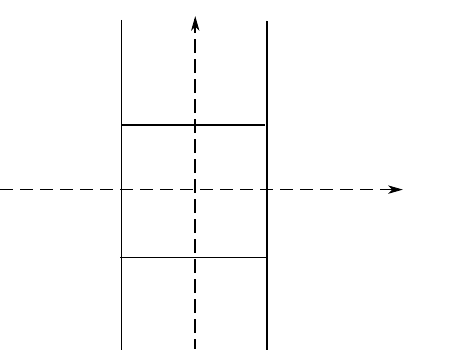  	
  	\caption{ Projections of the regions $\R_i$ on $\RR^2$.}
  	\label{fig_waz}
\end{figure}

We start Wazewski's algorithm in 
 $\R_3$. Here, the invertible minor $A$ is given by rows $1$, $2$ and $5$ of 
 $\vartau$ (full-rank lines of $\vartau$) and $B$ by rows $3$ and $4$. With picking $D$ as the 
 identity, $C$ is $(A^T)^{-1}B$ according to \eqref{eq_C_waz}. $D$ gives rows $3$ and $4$ of 
 $\gamma $ and $C$ gives rows $1$, $2$ and $5$ of $\gamma$.
\\
Then we move to the region $\R_2$.  There the 
matrix $A$ is given by rows $1$, $2$ and $4$ of $\vartau$, $B$ by rows
$3$ and $5$. Also $D$, along the boundary between $\R_3$ and $\R_2$, is 
given by rows $3$ and $5$ of $\gamma$ obtained in the previous step.
We extrapolate this inside $\R_2$ by keeping $D$ constant in planes 
$x_1=$constant. An expression for $C$ and therefore for $\gamma $ 
follows.
\\
We do exactly the same thing for $\R_4$.
\\
Then we move to the region $\R_1$.  There the 
matrix $A$ is given by rows $1$, $2$ and $3$ of $\vartau$, $B$ by rows
$4$ and $5$. Also $D$, along the boundary between $\R_1$ and $\R_2$, 
between $\R_1$ and $\R_3$ and between $\R_1$ and $\R_4$, is 
given by rows $4$ and $5$ of $\gamma$ obtained in the previous steps.
We extrapolate this inside $\R_1$ by keeping $D$ constant in planes 
$x_2=$constant. An expression for $C$ and therefore for $\gamma $ 
follows.
\\
We do exactly the same thing for $\R_5$.

Note that this construction produces a continuous $\gamma$, but we could have extrapolated
$D$ in a smoother way to obtain $\gamma$ as smooth as necessary.
\hfill$\triangle$\end{example}

Although Wazewski's method provides a more general answer to the problem of Jacobian complementation than the few solvable $\tilde{P}[m,n]$ problems, the explicit expressions of $\gamma$ given in Section \ref{jac_comp_priv}  are preferred in practice (when the couple $(m,n)$ is appropriate) to Wazewski's costly computations.


\section{About Problem \ref{defP2} : Image extension of a diffeomorphism}~
\label{diffeo_ext}
We study now how a diffeomorphism can be augmented to make its image be the whole set 
$\RR^m$, i.e. to make it surjective.

\subsection{A sufficient condition}

There is a rich literature reporting very advanced results on the 
diffeomorphism extension problem.
In the following some
of the techniques are inspired from \cite[Chapter 8]{Hir}
and \cite[pages 2,  7 to 14 and 16 to 18]{Milnor}(among others). 
Here we are interested in the 
 particular aspect of this topic which is the diffeomorphism image extension as described by
Problem \ref{defP2}.
A very first necessary condition about this problem is in the following remark.

\begin{remark}
	\normalfont
	\label{rem_contractible_diffeoExt}
Since $\ohie$, obtained solving Problem \ref{defP2}, makes the set $\OuvDef$ diffeomorphic to $\RR^m$, 
$\OuvDef$ must be contractible.
\end{remark}

One of the key technical property which will allow us to solve Problem \ref{defP2} can be phrased as follows.

\begin{definition}[Condition $\condC$]
\label{def2}
An open subset $E$ of $\RR^m$ is said to verify condition $\condC$ if
there exist a $C^1$ function $\kappa:\RR^m\to \RR$, a bounded\footnote{If not
replace $\chi$ by $\frac{\chi}{\sqrt{1+|\chi|^2}}$.} $C^1$
 vector
field $\chi$, and a closed set $K_0$
 contained in $E$ such that:
\begin{enumerate}
\item
\label{point1}
 $E=\left\{ \varchi\in \RR^n, \kappa(\varchi) <0 \right\}$
\item
$K_0$ is globally attractive  for $\chi$
\item
\label{point2}
we have the following transversality property:
$$
\frac{\partial \kappa}{\partial \varchi}(\varchi) \chi(\varchi) <0
\qquad \forall \varchi\in \RR^m:\:  \kappa(\varchi)=0.
$$
\end{enumerate}
\end{definition}

The two main ingredients of this condition are the function $\kappa $ 
and the vector field $\chi$ which, both, have to satisfy the 
transversality property $\condC$.\ref{point2}.
In the case where only the function $\kappa$ is given 
satisfying $\condC$.\ref{point1} and with no critical point on the 
boundary of $E$, its gradient could play the role of $\chi$. But then 
for $K_0$ to be globally attractive we need at least to remove all the 
possible critical points that $\kappa $ could have outside $K_0$. This 
task is performed for example on Morse functions in the proof of the 
$h$-Cobordism Theorem \cite{Milnor}. We are in a much simpler situation when $\chi$ is given and makes $E$ forward 
invariant.

\begin{lemma}
\label{lemCond}
Let $E$ be a bounded open subset  of $\RR^m$, 
$\chi$ be a bounded $C^1$ vector field , and $K_0$ be a compact set contained in
$E$ such that:
\begin{enumerate}
\item
 $K_0$ is globally asymptotically stable  for $\chi$
\item
$E$ is forward invariant for $\chi$.
\end{enumerate}
For any strictly positive real number $\overline{d}$, 
there exists a bounded set  $\E$ such that
$$
\CL(E)\subset \E \subset \{\varchi\in\RR^m, \  \inf_{z_E \in E}|z-z_E| \leq \overline{d} 
\}
$$
and $\E$ verifies condition $\condC$.
\end{lemma}

This Lemma says roughly that if $E$
does not satisfy conditions $\condC$.\ref{point1} or $\condC$.\ref{point2} but
is forward invariant for 
$\chi$, then Condition $\condC$ is satisfied by an arbitrarily close 
superset of $E$. Its proof is given in Appendix \ref{ann_lem3}.

Our main result on the diffeomorphism image extension problem is:

\par\vspace{0.5em}\noindent

\begin{theorem}[Image extension]
\label{theo_diff_ext}
Let $\OuvDef $ be an open subset of $\RR^m $
and $\ohia$: $\OuvDef  \to  \RR^m$
be a diffeomorphism.
If 
\begin{list}{}{%
\parskip 0pt plus 0pt minus 0pt%
\topsep 0pt plus 0pt minus 0pt
\parsep 0pt plus 0pt minus 0pt%
\partopsep 0pt plus 0pt minus 0pt%
\itemsep 0pt plus 0pt minus 0pt
\settowidth{\labelwidth}{\quad a)}%
\setlength{\labelsep}{0.5em}%
\setlength{\leftmargin}{\labelwidth}%
\addtolength{\leftmargin}{\labelsep}%
}
\item[a)]
either $\ohia(\OuvDef)$ verifies condition $\condC$,
\item[b)]
or $\OuvDef$  is $C^2$-diffeomorphic to $\RR^m$ and $\ohia$ is $C^2$, 
\end{list}
then for any compact set
$K$ in $\OuvDef$, there exists a
diffeomorphism $\ohie : \OuvDef \rightarrow \RR^m$ 
solving Problem \ref{defP2}.
\end{theorem}

The proof of case a) of this theorem is given in Section \ref{sec3}. It provides  an explicit construction of $\ohie$. The proof of case b) can be found in Appendix \ref{app_diff_ext_case_b}. For the time being, we observe that a direct consequence is :

\begin{corollary}
\label{cor_theo_diff_ext}
Let $\OuvEx$ be a bounded subset of $\RR^n$,
$\Ouvxw$ be an open subset of $\RR^m$ containing $\CL(\OuvEx\times\{0\})$ and
 $\ohia:\Ouvxw\rightarrow\ohia(\Ouvxw)$  be a diffeomorphism
such that
\begin{list}{}{%
\parskip 0pt plus 0pt minus 0pt%
\topsep 0pt plus 0pt minus 0pt
\parsep 0pt plus 0pt minus 0pt%
\partopsep 0pt plus 0pt minus 0pt%
\itemsep 0pt plus 0pt minus 0pt
\settowidth{\labelwidth}{\quad a)}%
\setlength{\labelsep}{0.5em}%
\setlength{\leftmargin}{\labelwidth}%
\addtolength{\leftmargin}{\labelsep}%
}
\item[a)]
either $\ohia(\Ouvxw)$  verifies condition $\condC$,
\item[b)]
or
$\Ouvxw$ is $C^2$-diffeomorphic to $\RR^m$ and $\ohia$ is $C^2$.
\end{list}
Then, there exists a diffeomorphism
$\ohie:\Ouvxw \to \RR^m$, such that 
$$
\ohie(\Ouvxw)=\RR^m \quad , \quad \ohie(x,0)\;=\; \ohia(x,0)\qquad \forall x\in\OuvEx \ . 
$$
Thus, if besides the pair $(\ohia,\Ouvxw)$ solves Problem \ref{defP1}, then $(\ohie,\Ouvxw)$ solves Problems  \ref{defP1} and
\ref{defP2}.
\end{corollary}

\subsection{Proof of part a) of Theorem \ref{theo_diff_ext}}
\label{sec3}
We have the following technical lemma a constructive proof of which
is  given in Appendix \ref{ann_lem1}.
\begin{lemma}
	\label{lem1}
	Let $E$ be an open strict subset 
	of $\RR^m$ verifying Condition $\condC$. For any closed subset $K$ 
	of $E$, lying at a strictly positive distance of the boundary of $E$, 
	there exists a 
	diffeomorphism $\phi$: $\RR^m\rightarrow E$, such 
	that $\phi$ is the identity function on $K$.
\end{lemma}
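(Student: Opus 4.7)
The plan is to use the flow $\Phi_t$ of $\chi$ to push $\RR^m$ into $E$ while keeping $K$ pointwise fixed. Because $\chi$ is bounded and $C^1$, its flow is complete and is a $C^1$ diffeomorphism of $\RR^m$ for every $t$. First I would exploit the transversality condition to conclude that $E = \{\kappa < 0\}$ is strictly forward invariant, and combine this with global attractivity of $K_0 \subset E$ to deduce that every trajectory starting in $\RR^m$ reaches $E$ in finite time and remains there.

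Next I would construct a $C^1$ time function $\tau : \RR^m \to [0,\infty)$ such that $\tau \equiv 0$ on an open neighborhood $U$ of $K$ with $\overline U \subset E$, and $\Phi_{\tau(z)}(z) \in E$ for every $z \in \RR^m$. To obtain such a $\tau$ I would cover $\RR^m \setminus U$ by open sets on each of which a single constant push-in time suffices (using continuity of $\Phi$ in $z$ together with global attractivity), then glue these local times by a locally finite $C^\infty$ partition of unity while smoothly forcing the result to vanish on $U$. The candidate map is then $\phi(z) := \Phi_{\tau(z)}(z)$, which is immediately $C^1$, coincides with the identity on $K$, and satisfies $\phi(\RR^m) \subset E$.

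The hard part will be upgrading $\phi$ from a smooth map into $E$ to a global diffeomorphism onto $E$. Its differential is $D\phi(z) = D_z\Phi_{\tau(z)}(z) + \chi(\phi(z))\, D\tau(z)^\top$; the first summand is always invertible, and the rank-one correction remains harmless provided $\tau$ is non-decreasing along integral curves of $\chi$ (so that $D\tau(z)\,\chi(z) \geq 0$), which by Sherman--Morrison forces $D\phi(z)$ to stay invertible. This is the reason to construct $\tau$ as a cumulative push-in time rather than as an arbitrary interpolation.

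Injectivity and surjectivity onto $E$ are the subtlest ingredients and will drive the construction: I would arrange $\tau$ so that, outside $U$, $\phi$ restricts on each integral curve of $\chi$ to a bijective monotone reparametrization onto that curve's intersection with $E \setminus \phi(U)$. Foliating $\RR^m \setminus U$ by orbit segments of $\chi$ and picking on each segment a monotone time reparametrization that matches continuously across $\partial U$ is the main technical difficulty; I would handle it by working in a tubular-neighborhood chart along the cylinder of $\chi$-orbits crossing $\partial U$, defining $\tau$ to be a smooth function of the normal coordinate alone on that chart, and extending by the identity via the cutoff already built into $\tau$. Surjectivity onto $E$ would then follow because any $p \in E \setminus \phi(U)$ lies on a unique $\chi$-orbit whose backward portion exits $E$ through $\partial E$, and the chosen reparametrization visits $p$ exactly once; combined with $\phi|_{\overline U} = \mathrm{id}|_{\overline U}$ this yields a global bijection of $\RR^m$ onto $E$.
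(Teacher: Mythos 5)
Your overall strategy (push points into $E$ along the flow of $\chi$ by a state-dependent time $\tau$, keep a core region fixed, and control invertibility of $D\phi$ through the rank-one perturbation $1+D\tau(z)\chi(z)\neq 0$) is the same mechanism the paper uses, but two of your key steps are wrong as stated. First, the monotonicity condition you impose is in the wrong direction: a $\tau$ with $\tau\equiv 0$ on a neighborhood $U$ of $K$, $\tau>0$ outside $E$ (it must exceed the hitting time of $\partial E$ there), and $D\tau(z)\chi(z)\geq 0$ cannot exist, because along any orbit that starts outside $E$ and later enters $U$ (e.g.\ $E=\BR_1(0)$, $\chi(z)=-z$, $K=\CL(\BR_{0.9}(0))$), $\tau$ must drop from a strictly positive value to $0$, i.e.\ it must \emph{decrease} along the flow. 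The correct requirement is $1+D\tau(z)\chi(z)>0$, i.e.\ $\tau$ may decrease along orbits but at a rate strictly less than $1$. This is exactly what the paper arranges: it takes the hitting time $t_z$ of $\partial E$ (well defined and $C^s$ by transversality, attractivity of $K_0$ and the implicit function theorem, with $D t_z\,\chi=-1$) and sets $\phi(z)=Z\bigl(z,t_z+\temps(t_z)\bigr)$ with a universal strictly decreasing reparametrization $\temps$ satisfying $\temps(t)=-t$ for $t\leq-\varepsilon$, so that $1+D\tau\chi=-\temps'(t_z)>0$; your partition-of-unity gluing of locally constant push-in times gives no control whatsoever on $D\tau\,\chi$, so neither local nor global injectivity follows from it.

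Second, your surjectivity argument rests on the claim that every $p\in E\setminus\phi(U)$ lies on an orbit whose backward portion exits $E$ through $\partial E$. This is false in general: points of $K_0$, and more generally any point whose backward orbit remains in $E$ (your fixed region is only a small neighborhood of $K$, not of such points), are then not reached by any pushed-in point, and with $\tau\geq 0$ they can only be images of points on their own backward orbits. The paper avoids this by making $\phi$ the identity not just near $K$ but on all of $E_\varepsilon=E\setminus\Sigma$, where $\Sigma=\bigcup_{t\in[0,\varepsilon]}Z(\partial E,t)$ is a thin boundary layer; then only layer points need to be covered by the push-in, and for those the backward orbit reaches $\partial E$ within time $\varepsilon$ by construction (and $K\subset E_\varepsilon$ for $\varepsilon$ small since $K$ is compact). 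So the architecture of your proof is right, but the two ingredients that would make it work --- the sign condition on $D\tau\,\chi$ and the choice of the fixed region --- need to be replaced by their counterparts in the paper's construction; the explicit inverse $\phi^{-1}(y)=Z\bigl(y,t_y-\temps^{-1}(-t_y)\bigr)$ then gives bijectivity onto $E$ directly, without any tubular-neighborhood charts.
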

In the case a) of Theorem \ref{theo_diff_ext}, we suppose that $\ohia(\OuvDef)$ satisfies $\condC$.
Now, $\ohia$ being a diffeomorphism on an open set
$\OuvDef$, the image of any compact subset $K$ of $\OuvDef$ is a compact subset of $\ohia(\OuvDef)$.
According to Lemma \ref{lem1}, there exists a diffeomorphism $\phi$ from $\RR^m$ to $\ohia(\OuvDef)$ which is
the identity on $\ohia(K)$.  Thus, the function 
$
\ohie\;=\; \phi^{-1}\circ \ohia
$
solves Problem \ref{defP2} and the theorem is proved.

\begin{example}[Continuation of Example \ref{ex_osci_1}]\normalfont
\label{ex_osci_4}
In Example \ref{ex_osci_1}, we have introduced the function
$$
F(\xi)=\xi_2 \xi_3-\xi_1 \xi_4 \; \triangleq \; \frac{1}{2} \xi^T M \xi
$$
as a submersion on $\RR^4\!\setminus\!\{0\}$ satisfying
\begin{equation}
\label{LP8}
F(\ohi(x))=0,
\end{equation}
where $\ohi$ is the injective immersion  given in (\ref{eq_ExplHGImm}).
With it we have augmented $\ohi$ as
$$
\ohia(x,w)=\ohi(x)+\frac{\partial F}{\partial \xi}^T(\ohi(x)) \, w =\ohi(x)+ M \ohi(x) \, w
$$
which is a diffeomorphism on $\Ouvxw=\Ouvs\times ]-\varepsilon ,\varepsilon [$
for some 
strictly positive real number $\varepsilon $.

To modify $\ohia$ in $\ohie$ satisfying $\ohie(\Ouvxw)=\RR^4$, we let $K$ be the compact set
$$
K =\CL(\ohia(\OuvEx\times\{0\}))\  \subset \ohia(\Ouvxw)\subset \RR^4
\  .
$$
With Lemma \ref{lem1}, we know that, if
$\ohia(\Ouvxw)$ verifies condition $\condC$,  there exists a diffeomorphism
$\phi$ defined on $\RR^4$ such that $\phi$ is the identity function on the compact set $ K$ and
$\phi(\RR^4)=\ohie(\Ouvxw)$.  In that case, as seen above,
the diffeomorphism $\ohie=\phi^{-1}\circ\ohia$ defined
on $\Ouvxw$ is such that $\ohie=\ohia$ on $\OuvEx\times\{0\}$ and
$\ohie(\Ouvxw)=\RR^4$, i.e. would be a solution to Problems \ref{defP1} and \ref{defP2}.
Unfortunately this is impossible. Indeed, due to the observability singularity at
$x_1=x_2=0$, $\Ouvs$ (and thus $\Ouvxw$) is not contractible.  Therefore, there is no 
diffeomorphism $\ohie$ such that $\ohie(\Ouvxw)=\RR^4$.  We will see
in Section \ref{disc_P3} how this problem can be overcome.
For the time being, we show that it is still possible to find $\ohie$ such that
$\ohie(\Ouvxw)$ covers ``almost all'' $\RR^4$.  
The
idea is to find an approximation $E$ of $\ohia(\Ouvxw)$ verifying condition $\condC$ and apply the same method
on $E$.  Indeed, if $E$ is close enough to $\ohia(\Ouvxw)$, one can expect to have $\ohie(\Ouvxw)$
``almost equal to'' $\RR^4$.

With (\ref{LP8}) and since $M^2=I$, we have, 
$
F(\ohia(x,w))= |\ohi(x) |^2\, w  
$.
Since $\Ouvxw$ is bounded, there exists $\delta >0$ such that the set
$
E = \left\{ \xi\in \RR^4 :\,   F(\xi)^2<\delta \right \}
$
contains $\ohia(\Ouvxw)$ and thus the compact set $K$. Let us show that $E$ verifies condition $\condC$. We pick
\\[-0.1em]\null \hfill $\displaystyle 
\kappa (\xi)\;=\; F(\xi)^2-\delta\;=\; 
\left(\frac{1}{2}
\xi  ^T M \xi 
\right)^2-\delta
\  .
$\hfill \null \\[0.7em]
and consider the vector field $\chi$ \\[1em] \null \hfill $\displaystyle \chi (\xi) =-2 \frac{\partial \kappa
}{\partial \xi}(\xi)=-[\xi ^T M \xi ]\, M\xi $\hfill or more simply \hfill $\displaystyle \chi(\xi)=-\xi \ .
$ \hfill \null \\[1em] The latter implies the transversality property $\condC$.\ref{point2} is
verified.  Besides, the closed set $K_0=\{0\}$ is contained in $E$ and is globally attractive for the vector
field $\chi$.

Then Lemma \ref{lem1} gives the existence of a diffeomorphism 
$\phi: \RR^4 \to E$ which is the identity on $ K$ and verifies $\phi(\RR^4)=E$.
We obtain an expression of $\phi$ by following
the constructive proof of this Lemma (see Appendix \ref{ann_lem1}).
Let $E_\varepsilon$ be the set
$$
E_\varepsilon = \left\{ 
\xi \in \RR^4:\,   \left(\frac{1}{2} \xi ^T M \xi
\right)^2 < e^{-4\varepsilon} \, \delta \right\} \ .
$$
It contains $ K$. Let also $\temps:[-\varepsilon,+\infty[\to \RR$ and $\tde :\RR^4\setminus E_\varepsilon \to \RR$ be
the 
functions  defined as
\\[0.1em]\null \hfill $\displaystyle 
\temps(t)\;=\; \frac{(t+\varepsilon )^2}{2\varepsilon +t}
\quad ,\qquad 
\tde (\xi)
=\frac{1}{4}\ln \frac{\left(\frac{1}{2} \xi ^T M \xi
	\right)^2}{\delta} \ .
$\refstepcounter{equation}\label{eq_def_nu}\hfill$(\theequation)$\\[0.7em]
$\tde (\xi )$ is the time that a solution of
$
\dot \xi =\chi(\xi)=-\xi
$
with initial condition  
$\xi $ needs to reach the boundary of $E$
i.e. $e^{-\tde (\xi )} \xi $ belongs to the boundary of $E$.
From the proof Lemma \ref{lem1}, we know
the
function
$\phi$ : $\RR^4\rightarrow E$ defined as :
\begin{equation}
\label{LP10}
\phi(\xi ) = 
\left\{
\begin{array}{ll}
\xi 
&, \quad \textrm{if} \ \left(\frac{1}{2} \xi ^T M \xi
\right)^2 
\leq e^{-4\varepsilon} \delta ,\\[0.3em]
e^{-\temps(\tde (\xi ))} \xi 
&, \quad \textrm{otherwise},
\end{array}
\right.
\end{equation}
is a diffeomorphism 
$\phi: \RR^4 \to E$ which is the identity on $ K$ and verifies $\phi(\RR^4)=E$.

As explained above, we use $\phi$ to replace $\ohia$ by the diffeomorphism
$\ohie=\phi^{-1}\circ\ohia$ also defined on $\Ouvxw$.  But, because $\ohia(\Ouvxw)$ is a strict 
subset of $E$, $\ohie(\Ouvxw)$ is a strict subset of $\RR^4$, i.e. equation (\ref{n9})  is not satisfied.  
Nevertheless, for any trajectory of the observer $t\mapsto \hat{\xi}(t)$ in $\RR^4$, our estimate
defined by $(\hat{x},\hat{w})=\ohie^{-1}(\hat{\xi})$ will be such that $\ohia(\hat{x},\hat{w})$
remains in $E$, along this trajectory i.e. $|\ohi(\hat x) |^2\,\hat w <\delta$.  This ensures that, far from the observability singularity where
$|\ohi(\hat x) |=0$, $\hat{w}$ remains sufficiently small to keep the invertibility of the
Jacobian of $\ohie$.  But we still have a problem close to the observability singularity, i.e.  when  $(\hat x_1,\hat 
x_2)$ is close to the origin.  We shall see in Section \ref{disc_P3} how to avoid this difficulty via a better
choice of the initial injective immersion $\ohi$.
\hfill$\triangle$\end{example}

\subsection{Application : bioreactor}
\label{bioreactor}
As a more practical illustration we consider the model of
bioreactor presented in \cite{GauHamOth} :
$$
\dot{x}_1=\frac{a_1x_1x_2}{a_2x_1+x_2}-ux_1 \ , \ 
\dot{x}_2=-\frac{a_3a_1x_1x_2}{a_2x_1+x_2}-ux_2+ua_4 \ ,\
y=x_1
$$
where the $a_i$'s are strictly positive real numbers and the control $u$ verifies :
$
0<u_{min}<u(t)<u_{max}<a_1
$.
This system evolves in the set
$
\Ouvt\;=\; \left\{ x\in\RR^2\,  :\: x_1>\varepsilon_1\: ,\; x_2>-a_2x_1 \right \}
$
which is forward invariant.
A high gain observer design leads us to consider the function 
$\ohi:\Ouvt\to \RR^2$ defined as :
\\[0.5em]\null \hfill $\displaystyle 
\ohi(x_1,x_2) \;=\;  \left( x_1,\left.\dot x_1\right|_{u=0}\right)\;=\; \left( x_1 , \frac{a_1 x_1 x_2}{a_2x_1+x_2} \right)
\  .
$\hfill \null \\[0.3em]
It is a diffeomorphism onto
$$
\ohi(\Ouvt)\;=\; \left\{ \xi\in\RR^2\,  :\: \xi_1>0\: ,\; 
a_1\xi_1>\xi_2 \right \} \ .
$$
The image by $\ohi$ of the bioreactor dynamics is of the form
$$
\dot{\xi}_1 \;=\;  \xi_2 + g_1(\xi_1) u \quad ,\qquad 
\dot{\xi}_2 \;=\;  \of _2(\xi_1,\xi_2) + g_2(\xi_1,\xi_2) u
$$
for which the following high gain observer can be built:
\begin{equation}
\label{bio_dyn_obs}
\dot \ox_1=\ox_2 + g_1(\ox_1) u - k_1 \ell (\ox_1-y)
\quad ,\quad 
\dot \ox_2=\of_2 (\ox_1,\ox_2) + g_2(\ox_1,\ox_2) u  - k_2 \ell (\ox_1-y)
\: ,
\end{equation}
where $k_1$ and $k_2$ are strictly positive real numbers 
and $\ell$ sufficiently large. As in \cite{GauHamOth}, $\ohi$ being a diffeomorphism
the dynamics of this observer in the $x$-coordinates are
\begin{equation}
\label{bio_dyn_obs_x}
\renewcommand{\arraystretch}{1.5}
\dot{\hat{x}}\;=\; \left(\begin{array}{c}
\frac{a_1\hat x_1\hat x_2}{a_2\hat x_1+\hat x_2}-u\hat x_1 \\
-\frac{a_3a_1\hat x_1\hat x_2}{a_2\hat x_1+\hat x_2}-u\hat x_2+ua_4 
\end{array}\right)
+ \ell
\left(
\begin{array}{cc}
1 & 0
\\
-1
&
\frac{(a_2\hat x_1+\hat x_2)^2}{a_1a_2\hat x_1^2 }
\end{array}\right)  \left(\begin{array}{c}
k_1 \\ k_2
\end{array}\right) (\ox_1-y)
\  .
\end{equation}
Unfortunately the right hand side is singular at $\hat x_1=0$ 
and $\hat x_2 = -a_1 \hat x_1$. $\Ouvt$ being forward invariant, the system trajectories stay away from the 
singularity. But nothing guarantees the same property holds for the observer trajectories given by (\ref{bio_dyn_obs_x}). 
In other words, since $\ohi$ is already a diffeomorphism, Problem \ref{defP1} is solved with $m=n$, 
$\ohia=\ohi$ and $\Ouvxw=\Ouvt$.
But (\ref{n9}) is not satisfied, i.e.  Problem \ref{defP2} must be solved.

To construct the extension $\ohie$ of $\ohia$, we view the image 
$\ohia(\Ouvxw)$ as the intersection
$
\ohia(\Ouvxw)= E_1 \cap E_2 
$
with :
$$
E_1\;=\; \left\{ (\xi_1,\xi_2) \in \RR^2, \ \xi_1> \varepsilon_1\right\}
\quad ,\qquad 
E_2\;=\; \left\{(\xi_1,\xi_2) \in \RR^2, \ a_1\xi_1 >\xi_2 \right\} \ .
$$
This exhibits the fact that $\ohia(\Ouvxw)$ does not satisfy the 
condition $\condC$ since its boundary is not $C^1$. We could 
smoothen this boundary to remove its ``corner''. But we prefer to 
exploit its particular ``shape'' and proceed as follows : 
\begin{list}{}{%
\parskip 0pt plus 0pt minus 0pt%
\topsep 0.5ex plus 0pt minus 0pt%
\parsep 0pt plus 0pt minus 0pt%
\partopsep 0pt plus 0pt minus 0pt%
\itemsep 0.5ex plus 0pt minus 0pt
\settowidth{\labelwidth}{\  \  1.}%
\setlength{\labelsep}{0.5em}%
\setlength{\leftmargin}{\labelwidth}%
\addtolength{\leftmargin}{\labelsep}%
}
\item[1.]
We build a diffeomorphism $\phi_1$ : 
$\RR^2\rightarrow E_1$ which acts on $\xi_1$ without changing $\xi_2$.
\item[2.]
We build a diffeomorphism $\phi_2$ : 
$\RR^2\rightarrow E_2$ which acts on $\xi_2$ without changing $\xi_1$.
\item[3.]
Denoting $\phi=\phi_2\circ\phi_1:\RR^2\to E_1\cap E_2$,
we take $\ohie =	\phi^{-1} \circ \ohia:\,  \Ouvxw 
\to \RR^2 $.
\end{list}
To build $\phi_1$ and $\phi_2$, we follow the procedure given in the proof of 
Lemma \ref{lem1} since 
$E_1$ and $E_2$ satisfy condition $\condC$ with~:
$$
\kappa_1(\xi)=\varepsilon_1-\xi_1 \; ,\   \kappa_2(\xi)=\xi_2-a_1\xi_1
\; ,\  
\chi_1(\xi)=\!
\left(
\begin{array}{c}
-(\xi_1-1) \\
0
\end{array}
\right) 
\; ,\  
\chi_2(\xi)= \!
\left(
\begin{array}{c}
0 \\
-(\xi_2+1)
\end{array}
\right) .
$$
By following the same steps as in Example \ref{ex_osci_4}, with 
$\varepsilon $ an arbitrary small strictly positive real number
and $\temps $ defined in (\ref{eq_def_nu}), we obtain~:
\begin{equation}
\label{LP17}
\left|
\begin{array}{@{}r@{\; }c@{\; }l@{}}
\tde _1(\xi)&=&\ln \frac{1-\xi_1}{1-\varepsilon} 
\\[0.5em]
E_{\varepsilon,1}&=&\left\{ (\xi_1,\xi_2)\in \RR^2, \ \xi_1> 1-\frac{1-\varepsilon}{e^{\varepsilon} }\right\}
\\[0.5em]
\phi_1(\xi) &=&
\left\{
\begin{array}{@{\,  }l@{\; }l@{}}
\xi &, \quad \textrm{if} \ \xi\in E_{\varepsilon,1}\\
\frac{\xi_1-1}{e^{\temps (\tde _1(\xi))}}+1 &, \quad \textrm{otherwise}
\end{array}
\right.
\end{array}
\right.\quad  \left|
\begin{array}{@{}r@{\; }c@{\; }l@{}}
\tde _2(\xi)&=&\ln \frac{\xi_2+1}{a_1 \xi_1+1} \ ,
\\[0.5em]
E_{\varepsilon,2}&=&\left\{ (\xi_1,\xi_2)\in \RR^2, \ \xi_2 \leq \frac{a_1\xi_1+1}{e^{\varepsilon}}-1 \right\}
\\[0.5em]
\phi_2(\xi) &=&
\left\{
\begin{array}{@{\,  }l@{\; }l@{}}
\xi &, \quad \textrm{if} \ \xi\in E_{\varepsilon,2} \\
\frac{\xi_2+1}{e^{\temps (\tde _2(\xi))}}-1 &, \quad \textrm{otherwise}
\end{array}
\right.
\end{array}
\right.
\end{equation}

We remind the reader that, in the $\ox$-coordinates, the observer dynamics are 
not modified. The difference between using $\ohi$ or $\ohie$ is seen in the $\hat x$-coordinates 
only. And, by construction it has no effect on the system trajectories since we have
$$
\ohi(x)=\ohie(x)\quad \forall x \in \Ouvt\; \mbox{``}-\varepsilon \mbox{''}
\  .
$$
As a consequence the difference between $\ohi$ 
and $\ohie$ is significant only during the transient, making sure, for the latter, that 
$\hat x$ never reaches a singularity of the Jacobian of $\ohie$.

We present in 
Figure \ref{bio_simu_traj} the results in the $\ox$ coordinates (to 
allow us to see the effects of both $\ohi$ and $\ohie$) of a simulation with 
(similar to \cite{GauHamOth}) :
\\[0.5em]\null \hfill $\displaystyle
\begin{array}{c}
\displaystyle 
a_1=a_2=a_3=1\; ,\  a_4=0.1
\\
\displaystyle 
u(t)=0.08\ \mbox{for}\  t\leq 10\  ,\quad   = 0.02 \  \mbox{for}\  10 
\leq t\leq 20\  ,\quad  = 0.08 \  \mbox{for}\  t\geq 
20
\\
\displaystyle 
x(0)=(0.04,0.07), \quad \hat{x}(0)=(0.03,0.09)
,\quad \ell =5.
\end{array}
$\hfill \null 
\begin{figure}[h]
	\centering
	\def\svgwidth{10cm}
	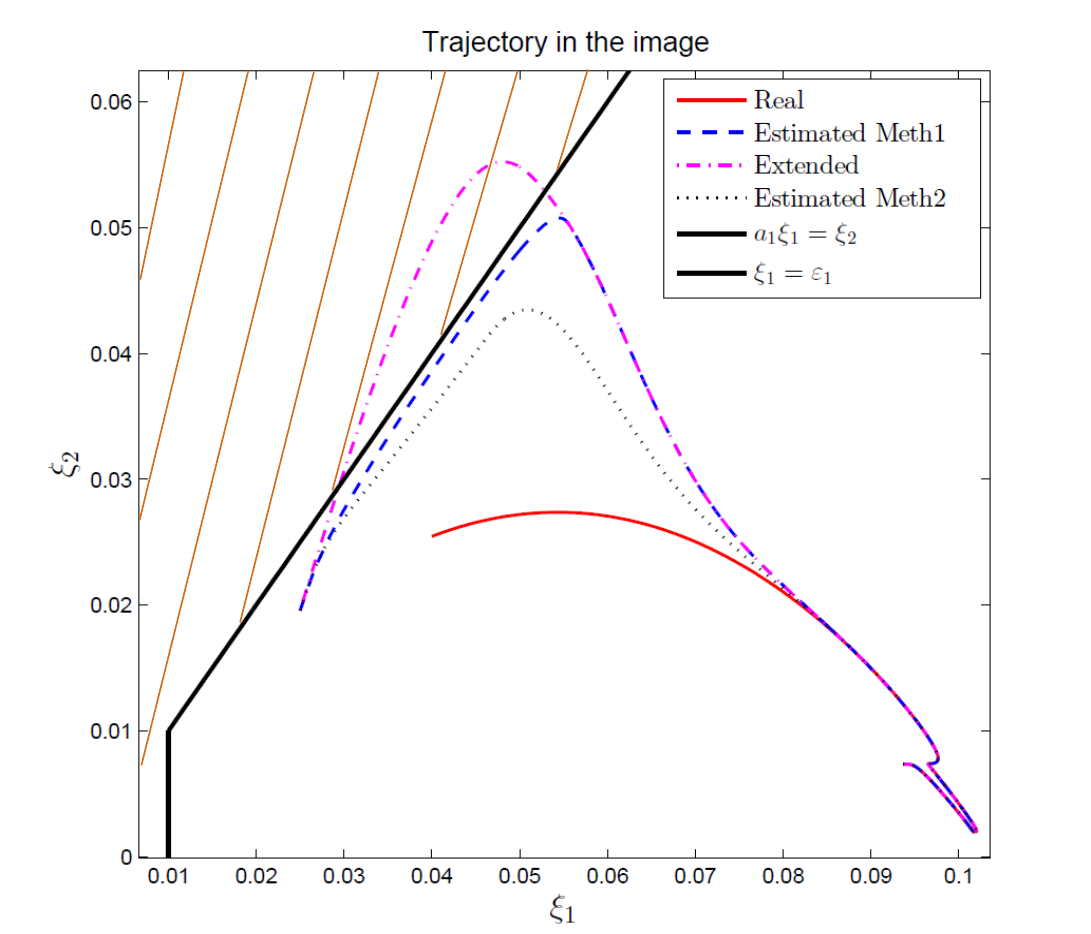
	\caption{Bioreactor and observers solutions in the $\ox$-coordinates}%
	\label{bio_simu_traj}
\end{figure}


The solid black curves are the singularity locus.
The red (= solid dark) curve represents the bioreactor solution.
The magenta (= light grey dashdot) curve represents the solution of the observer built with 
$\ohie$. It evolves freely in 
$\RR^2$ according to the dynamics (\ref{bio_dyn_obs}), not worried by 
any constraints. The blue (= dark dashed) curve represents its image 
by $\phi$ which 
brings it back inside the constrained domain where 
$\ohi^{-1}$ can then be used. This means these two curves represent the same 
object but viewed in different coordinates.

The solution  of the observer built with 
$\ohi$ would coincide with the magenta (= light grey dashdot) curve up to 
the point it reaches one solid black curve of a 
singularity locus. At that point it leaves $\ohi(\Ouvt)$ and consequently 
stop  existing in the $x$-coordinates.

As proposed in \cite{MagPas,AstPra},
instead of keeping the raw dynamics (\ref{bio_dyn_obs}) untouched as 
above, 
another solution would be to modify them to force $\ox$ to remain in the set $\ohi(\Ouvt)$. For instance, taking 
advantage of the convexity of this set, the modification 
proposed in \cite{AstPra} consists in adding to (\ref{bio_dyn_obs}) 
the term
\begin{equation}
\label{LP16}
\mathcal{M} (\ox)= -
g
\, S_\infty \,
\frac{\partial \mathfrak{h}}{\partial \ox}(\ox)^T \, \mathfrak{h}(\ox) 
\quad ,\qquad 
\mathfrak{h}(\ox) \;=\; \left(\begin{array}{c}
\max\{\kappa_1(\ox)+
\varepsilon 
,0 \}  ^2
\\
\max\{\kappa_2(\ox)+
\varepsilon 
,0 \} ^2
\end{array}\right)
\end{equation}
with
$S_\infty$ a symetric positive definite matrix depending on $(k_1,k_2,\ell)$,
$ \varepsilon $ an arbitrary small real number
and $
g
$ a sufficiently large real number.
The solution corresponding to this modified observer dynamics is 
shown in Figure \ref{bio_simu_traj} with
the dotted black curve. As expected it 
stays away from the the singularities locus in a very efficient way. 
But, for this method to apply, we have the restriction that 
$\ohi(\Ouvt)$ should be convex, instead of satisfying the less 
restrictive condition $\condC$. Moreover, to guarantee that $\ox$ is 
in $\ohi(\Ouvt)$, $
g
$ has to be large enough and even larger when the measurement noise is larger.
On the contrary, when the observer is built with 
$\ohie$, there is no need to tune properly any parameter to obtain 
convergence, at least theoretically. Nevertheless there maybe some 
numerical problems when $\ox$ becomes
too large or equivalently $\phi(\ox)$ is too close to the boundary of $\ohi(\Ouvt)$.
To overcome this difficulty we can select the ``thickness'' of 
the layer (parameter $\varepsilon$ in (\ref{LP17}))
sufficiently large. Actually instead of ``opposing'' the two methods, we 
suggest to combine them. The
modification (\ref{LP16}) makes sure $\ox$ does not go too far outside the domain, 
and $\ohie$ makes sure that $\hat x$ does not cross the 
singularity locus.


\section{About the requirement that  $(\ohede{x},\of  )$ is in $\setphitau$
in Proposition \ref{prop_CSobsWithoutInversion}
}~
\label{sec1}
\\
Throughout Sections \ref{jac_comp} and \ref{diffeo_ext}, we have given conditions
under which it is possible to solve 
 Problem \ref{defP1} and Problem \ref{defP2} when 
Assumption $\hypo$ holds and $\OuvEx$ is bounded.

However, to apply  Proposition \ref{prop_CSobsWithoutInversion} we need
$\ohede{x}$, the $x$-component of 
the inverse $\ohe$ of $\ohie$, solution of Problem \ref{defP2}, to be associated with a function $\of  $ such that the pair 
$(\of  ,\ohede{x})$ is in the set $\setphitau$ given by assumption 
$\hypo$.

Fortunately pairing a function $\of  $ with a function $\ohede{x}$ obtained from a left inverse
of $\ohie$  is not as difficult as it seems, at least for general purpose observer designs
such as high gain observers or  nonlinear Luenberger observers.

Indeed, we have already observed in point \ref{point3} of Remark \ref{rem1} that if, as for Luenberger observers, 
there is a pair, in the set 
$\setphitau$, the component $\of  $ of which does 
not depend on $\tau$, then we can associate this $\of  $ to any
$\ohede{x}$.

Also, for high gain observers, we need only that $\ohede{x}$, used as argument of $\of  $, be globally Lipschitz. This is 
obtained by modifying, if needed, this function outside a compact set, as the saturation function
does in (\ref{LP3}).

\section{Modifying $\ohi$ and $\setphitau$ given by Assumption $\hypo$}~
\label{disc_P3}
\\
The sufficient conditions, given in Sections \ref{jac_comp} and \ref{diffeo_ext}, 
to solve  Problem \ref{defP1} and Problem \ref{defP2}
 in order to fulfill  
the requirements 
of Proposition \ref{prop_CSobsWithoutInversion},
impose
conditions on the dimensions or on the domain of injectivity $\Ouvt$
which are not always satisfied : contractibility for Jacobian complementation
\jacobcomplet
and diffeomorphism extension, limited number of pairs $(m,n)$ for the 
$\tilde P[m,n]$ problem, etc. Expressed in terms of our initial problem, these conditions are 
limitations on the data $f$, $h$ and $\ohi$ that we considered. In the following, we 
show by means of examples that, in some 
cases, these data can be modified in such a way that our various tools apply 
and give a satisfactory solution. Such modifications are possible 
since we restrict our attention
to  system solutions which remain in $\OuvEx$. Therefore 
we can modify arbitrarily the data $f$, $h$ and $\ohi$ 
outside this set. For example we can add arbitrary ``fictitious'' components to 
the measured output $y$ as long as their value is known on $\OuvEx$.

\subsection{For contractibility}

It may happen that the set $\Ouvt$ attached to $\ohi$ is not contractible,
for example due to an observability singularity. We have seen that
Jacobian complementation
\jacobcomplet
and image extension may be 
prevented by this 
(see Theorem \ref{theo_Waz} and Remark \ref{rem_contractible_diffeoExt}).
A possible approach to overcome this difficulty when we know the system trajectories stay away from
the singularities is to add a fictitious output traducing this information :

\begin{example}[Continuation of Example \ref{ex_osci_2}]\normalfont
\label{ex_osci_5}
The observer we have obtained at the end of Example \ref{ex_osci_2} 
for the harmonic oscillator with unknown frequency is not 
satisfactory in particular because of the singularity at $\hat 
x_1=\hat x_2=0$. To overcome this difficulty we add, to the given 
measurement $y=x_1$, the following 
\\[0.5em]\null \hfill $\displaystyle 
y_2=h_2(x) \;=\;  \wp (x_1,x_2)\,   {{x_3}}
$\hfill \null \\
with
\\\null \hfill $\displaystyle 
\wp (x_1,x_2)\;=\; \max\left\{ 0 ,  \frac{1}{r^2} - (x_1^2 + x_2^2) 
\right\}^4 \ .
$\hfill \null \\[0.5em]
By construction this function is zero on $\OuvEx$ and $y_2$ can thus be considered as an extra measurement.
The interest of $y_2$ is to give access to $x_3$ even at the singularity $x_1=x_2=0$. 
Indeed, consider the new function $\ohi$ defined as
\begin{equation}
\ohi(x)\;=\; \left(
x_1 \,,\: x_2 \,,\: -x_1x_3 \,,\: -x_2x_3 \,,\:  \wp (x_1,x_2)\,  {{x_3}}
\right) \ .
\label{ohi_HG6}
\end{equation}
$\ohi$ is $C^1$ on $\RR^3$ and its Jacobian is~:
\begin{equation}
\label{LP12}
\frac{\partial \ohi}{\partial x}(x) \; = \;
\left(\begin{array}{ccc}
    1    &    0     &   0
\\
    0    &    1     &   0
\\
   -x_3   &    0     &  -x_1
\\
    0    &   -x_3    &  -x_2
\\
\frac{\partial \wp }{\partial x_1}  {{x_3}}
&
\frac{\partial \wp }{\partial x_2}  {{x_3}}
& \wp 
\end{array}\right) \ ,
\end{equation}
which has full rank $3$ on $\RR^3$, since $\wp (x_1,x_2) \neq 0$ when $x_1=x_2=0$.
It follows that the singularity has disappeared and this new $\ohi$ is an injective immersion
on the entire $\RR^3$ which is contractible.

We have shown in Example \ref{ex-Waz} how Wazewski's algorithm 
allows us to get in this case a $C^2$ function $\gamma :\RR^3 \to \RR^4$ 
satisfying~:
$$
\det\left(\frac{\partial \ohi}{\partial x}(x) \  \gamma 
(x)\right)\;\neq\; 0
\qquad \forall x\in \RR^3
\  .
$$
This gives us 
$
\ohia(x,w)\;=\; \ohi(x) + \gamma (x) w
$
which is a $C^2$-diffeomorphism on $\RR^3\times\BR_\varepsilon(0)$,
with $\varepsilon $ sufficiently small.

Furthermore, $\Ouvxw=\RR^3\times\BR_\varepsilon(0)$ being now diffeomorphic to $\RR^5$, Corollary 
\ref{cor_theo_diff_ext} applies and provides an extension $\ohie$ of $\ohia$ satisfying  Problems \ref{defP1} and \ref{defP2}.
\hfill$\triangle$\end{example}

\subsection{For a solvable $\tilde P[m,n]$ problem}
If we are in a case that cannot be reduced to a solvable $\tilde P[m,n]$ problem, we may try to 
modify $m$ by adding  arbitrary rows to $\frac{\partial \ohi}{\partial x}$. We illustrate this technique 
with the following example.

\begin{example}[Continuation of Example \ref{ex_osci_5}]\normalfont
\label{ex_osci_3} 
In Example \ref{ex_osci_5}, by adding the fictitious measured output 
$y_2=h_2(x)$, we have obtained another function $\ohi$ 
for the harmonic oscillator with unknown frequency which is an 
injective immersion on $\RR^3$. In this case, we have $n=3$ and $m=5$ 
which gives a pair not in  (\ref{LP11}). But, as already 
exploited in Example \ref{ex_osci_2}, the first $2$ 
rows of the Jacobian $\frac{\partial \ohi}{\partial x}$ in 
(\ref{LP12}) are independent for all $x$ in $\RR^3$. 
It follows that 
our Jacobian complementation
\jacobcomplet
problem reduces to complement the vector
$
\left(
  -x_1
,
 -x_2
,
 \wp (x_1,x_2)
\right)
$.
This is a problem  with pair $(3,1)$ which is not in  (\ref{LP11}) either.
Instead, the pair $(4,1)$ is,
meaning that the following 
vector can be complemented
 via a universal formula
$
\left(
  -x_1
,
 -x_2
,
 \wp (x_1,x_2)
,
0
\right)\  .
$
We have added a zero component, without changing the full rank 
property. Actually this vector is extracted from the Jacobian of
\\[0.7em]\null \hfill $\displaystyle 
\ohi(x)\;=\; \left(
x_1 \,,\: x_2 \,,\: -x_1x_3 \,,\: -x_2x_3 \,,\:  \wp (x_1,x_2)\,  {{x_3}} \,,\: 0
\right) 
\  .
$\refstepcounter{equation}\label{LP14}\hfill$(\theequation)$\\[0.7em]
In the high gain 
observer paradigm, this zero we add can come from another (fictitious) measured output
$
y_3=0
\  .
$
A complement  of
$
\left(
  -x_1
,
 -x_2
,
 \wp (x_1,x_2)
,
0
\right)
$
is
\\[0.7em]\null \hfill $\displaystyle 
\left(\begin{array}{@{}ccc@{\;}c@{\;}c@{\;}c@{}}
 x_2 & -\wp  & 0
\\
 -x_1 & 0 & -\wp 
\\
 0 & -x_1 & -x_2
\\
 \wp  & x_2 & -x_1
\end{array}\right)
$\hfill \null \\[0.3em]
It gives the 
function
\\[0.7em]$\displaystyle
\ohia(x,w)=\left(
\vrule height 0.6em depth 0.6em width 0pt
\right.
x_1\: , \:  x_2\: , \: [-x_1x_3 + x_2 w_1 - \wp (x_1,x_2) w_2 ]\: , \: [-x_2x_3-x_1 w_1 - \wp (x_1,x_2) w_3 ]\: , \: 
$\hfill \null
\\[0.3em]\null\hfill $\displaystyle
\left.[\wp (x_1,x_2) {{x_3}}-x_1w_2 -x_2 w_3 ]\: , \: [\wp (x_1,x_2)w_1 + x_2w_2 -x_1w_3) ]
\vrule height 0.6em depth 0.6em width 0pt
\right) .
$\\[0.7em]
the Jacobian determinant of which is
$
(x_1^2 + x_2^2 + \wp (x_1,x_2)^2)^2
$ 
which is nowhere $0$ on $\RR^6$.
Hence $\ohia$ is locally invertible. Actually it is
diffeomorphism from $\RR^6$ onto $\RR^6$  since we 
can express $\ox=\ohia(x,w)$ as
\\[0.7em]\null \hfill $\displaystyle 
\left(\begin{array}{@{}c@{}}
x_1 \\ x_2
\end{array}\right)=
\left(\begin{array}{@{}c@{}}
\ox_1 \\ \ox_2
\end{array}\right)
\  ,\quad 
\left(\begin{array}{@{}cccc@{}}
-\ox_1 & \ox_2 & -\wp (\ox_1,\ox_2) & 0
\\
-\ox_2 & -\ox_1 & 0 & -\wp (\ox_1,\ox_2)
\\
\wp  (\ox_1,\ox_2)& 0 & -\ox_1 & -\ox_2
\\
0 & \wp (\ox_1,\ox_2) & \ox_2 & -\ox_1
\end{array} \right)\left(\begin{array}{@{}c@{}}
x_3 \\ w_1 \\ w_2 \\ w_3
\end{array}\right)
=
\left(\begin{array}{@{}c@{}}
\ox_3 \\ \ox_4 \\ \ox_5 \\ \ox_6
\end{array}\right)\  ,
$\hfill \null \\[0.7em]
where the matrix on the left is invertible by construction. Since $\ohia(\RR^6)=\RR^6$, there is no need of an image extension and
we simply take $\ohie=\ohia$.
To have all the assumptions of Proposition \ref{prop_CSobsWithoutInversion} satisfied, it remains to find a function $\of $ such that
$(\ohede{x},\of  )$ is in the set $\setphitau$, the function $\ohede{x}$
being the $x$-component of the inverse of $\ohie$. Exploiting the fact that,
for $x$ in $\OuvEx$, we have
\\[0.7em]\null \hfill $\displaystyle 
\dot y_2=\dot{\overparen{\wp (x_1,x_2) {{x_3}}}}\;=\; 0
\quad ,\qquad 
\dot y_3\;=\; 0 \ ,
$\hfill \null \\[0.7em] 
the high gain observer paradigm gives the function
 $$
 \of(\ox,\hat x,y)\;=\; 
 \left(\begin{array}{@{}c@{}}
 \ox_2 + \ell k_1 (y-\hat x_1)
 \\
 \ox_3 + \ell^2 k_2 (y-\hat x_1)
 \\
 \ox_4 + \ell^3 k_3 (y-\hat x_1)
 \\
 \sat(\hat x_1\hat x_3^2) + \ell^4 k_4 (y-\hat x_1)
 \\
 - a \, \ox_5
 \\
 -b \, \ox_6
 \end{array}\right) 
 $$
where the function $\sat$ is defined in (\ref{LP13}) and $a$ and 
$b$ are arbitrary strictly positive real numbers.
With picking $\ell$ large enough, it can be paired with any function $\oh:\RR^6\to \RR^6$ which is 
locally Lipschitz, and thus in particular with $\ohede{x}$. 
Therefore, Proposition \ref{prop_CSobsWithoutInversion} applies and gives
the 
following observer for the harmonic oscillator with unknown frequency
\\[1em]$\displaystyle 
\left(\begin{array}{@{}c@{}}
\dot{\hat x}_1
\\
\dot{\hat x}_2
\\
\dot{\hat x}_3
\\
\dot{\hat w}_1
\\
\dot{\hat w}_2
\\
\dot{\hat w}_3\end{array}\right)
= 
\left(\begin{array}{@{}ccc@{\;}c@{\;}c@{\;}c@{}}
    1    &    0     &   0 & 0 & 0 & 0
\\
    0    &    1     &   0 & 0 & 0 & 0
\\
   -\hat x_3 -\frac{\partial \wp }{\partial \hat x_1} \hat w_2   &    \hat w_1-\frac{\partial \wp }{\partial  x_2} \hat w_2    &  -\hat x_1 & \hat x_2 & -\wp  & 0
\\
    -\hat w_1 -\frac{\partial \wp }{\partial \hat x_1}\hat w_3    &   -\hat x_3 - \frac{\partial \wp }{\partial  x_2} \hat w_3 &  -\hat x_2 & -\hat x_1 & 0 & -\wp 
\\
\frac{\partial \wp }{\partial x_1} {{\hat x_3}} -\hat w_2 &
\frac{\partial \wp }{\partial x_2} {{\hat x_3}}-\hat w_3& \wp  & 0 & -\hat x_1 & -\hat x_2
\\
 \frac{\partial \wp }{\partial x_1}\hat w_1  -\hat w_3  &    \frac{\partial \wp }{\partial x_2}\hat w_1 +\hat w_2    &   0 & \wp  & \hat x_2 & -\hat x_1
\end{array}\right)^{-1}
\times
$\hfill \null \refstepcounter{equation}\label{LP19}(\theequation)\\\null \hfill $\displaystyle 
\times
\left(\begin{array}{@{}c@{}}
\hat x_2 + \ell k_1 (y-\hat x_1)
\\{}
[-\hat x_1\hat x_3 + \hat x_2 \hat w_1 - \wp (\hat x_1,\hat x_2) \hat w_2 ] + \ell^2 k_2 (y-\hat x_1)
\\{}
[-\hat x_2\hat x_3-\hat x_1 \hat w_1 - \wp (\hat x_1,\hat x_2) \hat w_3 ] + \ell^3 k_3 (y-\hat x_1)
\\
\sat(\hat x_1\hat x_3^2) + \ell^4 k_4 (y-\hat x_1)
\\
- a \, [\wp (\hat x_1,\hat x_2) {{\hat x_3}}-\hat x_1\hat w_2 -\hat x_2 \hat w_3 ]
\\
-b \, [\wp (\hat x_1,\hat x_2)\hat w_1 + \hat x_2\hat w_2 -\hat x_1\hat w_3) ]
\end{array}\right) .
$\\[1em]
It is globally defined and globally convergent for any solution 
of the oscillator initialized in the set $\OuvEx$ given in (\ref{eq_OuvsExpl}).

Observer (\ref{LP19}) is an illustration of what can be 
obtained by using in a very nominal way our tools. We do not claim 
any property for it. For example, by using another design, an observer of dimension $2$, globally 
convergent on $\OuvEx$, can be obtained.
\hfill$\triangle$\end{example}

In this example we have made the Jacobian
complementation possible by increasing $m$ with augmenting the number of coordinates of $\ohi$.
Actually if we augment $\ohi$ with $n$ zeros the possibility of a Jacobian
complementation is guaranteed. Indeed pick any $C^1$  function $B$  the values of which are $m\times m$ matrices with positive 
definite symmetric part, we can 
complement $\left(\begin{array}{@{}c@{}}
\frac{\partial \ohi}{\partial x}
\\
0
\end{array}\right)$ which is full column rank with
$ 
\gamma \;=\; \left(\begin{array}{@{}c@{}}
-B 
\\
\frac{\partial \ohi}{\partial x}^\top
\end{array}\right)$.
This follows from
 the  identity (Schur complement) involving invertible matrices
$$
\left(\begin{array}{cc}
\frac{\partial \ohi}{\partial x} & -B
\\
0 & \frac{\partial \ohi}{\partial x}^\top
\end{array}\right)
\left(\begin{array}{cc}
0 & I
\\
I & B^{-1} \frac{\partial \ohi}{\partial x}
\end{array}\right)
\;=\; 
\left(\begin{array}{cc}
-B & 0
\\
\frac{\partial \ohi}{\partial x}^\top & \frac{\partial \ohi}{\partial x}^\top B^{-1} \frac{\partial \ohi}{\partial x}
\end{array}\right)
\  .
$$
So we have here a universal method to solve our Problem \ref{defP1}. Its drawback is that the dimension of 
the state increases by $m$, instead of $m-n$.


\section{Conclusion}
We have presented a method to express the dynamics of an observer in
 preferred  coordinates
  enlarging its domain of validity and possibly
avoiding the difficult left-inversion of
an injective immersion.
It assumes the knowledge of an injective immersion
and a converging observer for the immersed system. 

 The idea is not 
to modify this observer dynamics but to map it back to the 
 preferred  coordinates in a different way. Our construction involves  two 
tools :
the augmentation of an injective immersion into a diffeomorphism through
a Jacobian complementation
\jacobcomplet
and the extension of the image of the obtained
diffeomorphism to enlarge the domain
where the observer solutions can go without encountering singularities.

For the Jacobian complementation
 we rely on results by Wazewski 
\cite{Waz} and Eckmann \cite{Eck}. They allows us to build a 
diffeomorphism by augmenting the  preferred  coordinates with new ones
and to write the given observer dynamics in these augmented
coordinates.

For the
diffeomorphism extension, we have proposed our own method 
inspired from diffeotopies \cite[Chapter 8]{Hir}
and $h$-cobordism \cite[pages 2,  7 to 14 and 16 to 18]{Milnor}.

We have assumed the system is time-invariant and 
autonomous. Adding time-variations is not a problem but dealing with 
exogenous inputs is more complex. This is in part due to the fact 
that, as far as we know, the theory of observers, in 
presence of such inputs, relying on immersion into a space of 
larger dimension, as high gain observers or nonlinear Luenberger 
observers, is not
satisfactory enough
 yet. Progress on this topic has to be made 
before trying to extend our results.

\startmodif
One very important question which remains to be addressed is about optimizing the 
observer performance. In our framework it consists in an appropriate selection of the given ``raw'' observer, i.e. the 
functions $\varphi$ and $\tau ^*$ in (1.2), and the diffeomorphism $\tau _e$ for optimizing a cost 
expressing the quality of the estimated quantities 
with respect to what they are made for. For such a 
task, remaining in an ideal context with  no modelling error and no measurement disturbance, allows only to 
address the transient behavior of the state estimate. To be interesting for practice, at least as important if not more important is the 
long range dependence of the state estimates on unmodelled effects.
\stopmodif


\appendix

\section{Proof of Proposition \ref{prop_CSobsWithoutInversion}}
\label{app_CSobsWithoutInversion}
Let $(x_0,(\hat x_0,\hat w_0))$ be arbitrary in $\OuvEx\times\Ouvxw$ but 
such that $X(x_0,t)$ solution of (\ref{syst}) is defined and remains in $\OuvEx$ for $t$ in $[0,+\infty )$. 
Let $[0,T[$ be the right maximal interval of definition of the 
solution
$(X(x_0,t),\hat X(\hat x_0,\hat w_0,t;y_{x_0}),\hat W(\hat x_0,\hat w_0,t;y_{x_0}))$
when considered with values in  $\OuvEx\times\Ouvxw$.
Assume for the time being $T$ is
finite. Then, when $t$ goes to $T$, either $(\hat X(\hat x_0,\hat w_0,t;y_{x_0}),\hat W(\hat x_0,\hat w_0,t;y_{x_0}))$ 
goes to infinity or to the boundary of $\Ouvxw$.
\sloppy
By construction
$t\mapsto  \oX(t):= \ohie\left(\hat X(\hat x_0,\hat w_0,t;y_{x_0}),\hat W(\hat x_0,\hat w_0,t;y_{x_0})\right)$ is a solution of (\ref{eq_sysObs}) on $[0,T[$ with $\tau=\ohede{x}$. From 
assumption $\hypo$ and since $(\of,\ohede{x})$ is in $\setphitau$, it can be extended as a solution defined on 
$[0,+\infty [$ when considered with values in 
$\RR^m=\ohie(\Ouvxw)$. This implies that $\oX(T)$ is well 
defined in $\RR^m$.
Since, with (\ref{n9}), the inverse $\ohe$ of $\ohie$ is a diffeomorphism defined on 
$\RR^m$, we obtain
$
\lim_{t\to T}
\left(\hat X(\hat x_0,\hat w_0,t;y_{x_0}),\hat W(\hat x_0,\hat w_0,t;y_{x_0})\right)\;=\; 
\ohe(\oX(T))
$,
which is an interior point of $\ohe(\RR^m)=\Ouvxw$.
This point being neither a boundary point nor at infinity, we have a 
contradiction. It follows that $T$ is infinite.

Finally, with assumption $\hypo$, we have~:
$$
\lim_{t\to +\infty }
\left|\ohie\left(\hat X(\hat x_0,\hat w_0,t;y_{x_0}),\hat W((\hat x_0,\hat w_0,t;y_{x_0})\right)-\ohi(X(x_0,t))\right|\;=\; 0
\  .
$$
Since $X(x_0,t)$ remains in $\OuvEx$, $\ohi(X(x_0,t))$ equals $\ohie\left(X(x_0,t),0\right)$ and remains
in the compact set $\ohi(\CL(\OuvEx))$. So there exists a compact subset
$\mathbf{C}$ of $\RR^m$ and a time $t_\mathbf{C}$ such that $\ohie\left(\hat X(\hat x_0,\hat w_0,t;y_{x_0}),\hat W(\hat x_0,\hat w_0,t;y_{x_0})\right)$ is in $\mathbf{C}$ for all $t>t_\mathbf{C}$.  Since $\ohie$ is a diffeomorphism, its 
inverse $\ohe$ is Lipschitz on the compact set $\mathbf{C}$. This implies (\ref{1}).


\section{Proof of Lemma \ref{lem_CS_P1}}
\label{app_CS_P1}
The fact that $\ohia$ is an immersion for $\varepsilon$ small enough is established in \cite{AndEytPra}.
We now prove it is injective.
Let $\varepsilon _0$ be a strictly positive real number such that the Jacobian 
of $\ohia(x,w)$ in (\ref{ohie}) is invertible for any $(x,w)$ in 
$\CL(\Ouvs\times\BR_{\varepsilon_0}(0))$. Since 
$\CL(\Ouvs\times\BR_{\varepsilon_0}(0))$ is compact, not to contradict the Implicit 
function Theorem, there exists a strictly positive real number 
$\delta $ such that any two pairs $(x_a,w_a)$ and $(x_b,w_b)$ in 
$\CL(\Ouvs\times\BR_{\varepsilon_0}(0))$ which satisfy
\begin{equation}
\label{LP5}
\ohia(x_a,w_a)\;=\; \ohia(x_b,w_b) \quad , \quad (x_a,w_a)\;\neq \;(x_b,w_b)
\end{equation}
satisfies also
$
|x_a-x_b|\;+\; |w_a-w_b|\; \geq \; \delta .$
On another hand, since $\ohi$ is continuous and injective on $\CL(\Ouvs)\subset 
\Ouvt$,
it has an inverse which is uniformly continuous on the compact set
 $\ohi(\CL(\Ouvs))$ (see \cite[\S 16.9]{Bartle}).
It follows that there exists a strictly positive real number $\eta $ such that
$$
|x_a-x_b|< \frac{\delta }{2}
\qquad \forall \,  (\ohi(x_a), \ohi(x_b)) \in \ohi(\CL(\Ouvs))^2:\,  
|\ohi(x_a)-\ohi(x_b)| < \eta 
\  .
$$
But if (\ref{LP5}) holds with $w_a$ and $w_b$ in 
$\BR_{\varepsilon}(0)$ with $\varepsilon \leq \varepsilon _0$, we 
have
$$
\delta - 2\varepsilon  \:  \leq \:  |x_a-x_b|
\   ,\quad \   
|\ohi(x_a)-\ohi(x_b)|
\: =\:  
|\gamma (x_a)w_a-\gamma (x_b)w_b|
\: \leq \:  2\varepsilon \!\sup_{x\in\CL(\Ouvs)}|\gamma (x)|
\  .
$$
We have a contradiction for all
$\varepsilon \leq \min\left\{\frac{3\delta }{4},\frac{\eta }{2\varepsilon 
\sup_{x\in\CL(\Ouvs)}|\gamma (x)|}\right\}$. So (\ref{LP5}) cannot hold for such $\varepsilon $'s,
i.e. $\ohia$ is injective on $\Ouvs\times\BR_\varepsilon(0)$.

\startlongue{
\section{Proof of ``only if'' in Theorem \ref{theo_EckmannTilde}}
\label{app_EckmannTilde}
The following theorem is due to Eckmann.
\begin{theorem}[\cite{Eck}]
For $m>n$, there exists a continuous function $\vartau \in\RR^{m \, \times \, n} \mapsto \tilde 
\gamma_1(\vartau)\in \RR^m$ with non zero values 
and satisfying
\\[0.6em]\null \hfill $\displaystyle 
\tilde \gamma_1(\vartau)^T \vartau=0
\qquad \forall \vartau\in\RR^{m \, \times \, n}:\,  
\textsf{Rank}(\vartau)=n
$\hfill \null \\[0.6em]
if and only if  $(m,n)$ is in 
one of the following $4$ pairs
\\[0.7em]
\null \hfill $(\geq 2,m-1)$
\hfill or\hfill 
$(\textrm{even},1)$
\hfill or\hfill 
$(7,2)$
\hfill or\hfill 
$(8,3)$
\refstepcounter{equation}\label{LP11b}\hfill$(\theequation)$\\[0.7em]
\end{theorem}
With Remark \ref{rem_jacCompOrth}, any pair $(m,n)$ for 
which $\tilde P[m,n]$ is solvable must be one in the list (\ref{LP11b}).
The pair $(\geq 2,m-1)$ is in the list (\ref{LP11}). For the pair $(\textrm{even},1)$,
we need to find $m-1$ vectors to complement the given one into an invertible matrix.
After normalizing the vector $\vartau$ so that it belongs
to the unit sphere $\mathbb{S}^{m-1}$ and projecting each vector
$\gamma_i(\vartau)$ of $\gamma (\vartau)$ onto the orthogonal complement of $\vartau$, this complementation 
problem is
equivalent to asking whether $\mathbb{S}^{m-1}$ is parallelizable
(since the $\gamma_i(\vartau)$ will be a basis for the tangent space at $\vartau$
for each $\vartau\in \mathbb{S}^{m-1}$).  It turns out that this problems
admits solutions only for $m=4$ or $m=8$ (see \cite{BotMil}). So in the pairs $(\textrm{even},1)$ only 
$(4,1)$ and $(8,1)$ are in the list (\ref{LP11}).

Finally,  since $\tilde{P}[6,1]$ has no solution, the pairs
$(7,2)$ and $(8,3)$ cannot be in the list (\ref{LP11}).
Indeed
let $\vartau$ be a full column rank $(m-1)\times(n-1)$ matrix.
$\left(\begin{array}{@{}c@{\hskip 0.5em}c@{}}
\vartau & 0
\\
0 & 1
\end{array}\right)$ is a full column rank $m\times n$ matrix. If
if $\tilde{P}[m,n]$ has a solution, there exist a continuous $(m-1)\times (m-n)$ matrix function $\tilde 
\gamma $ and a continuous row 
vector functions $a^T$ such that
such that
$\left(\begin{array}{@{}l@{\hskip 0.1em}c@{\hskip 0.7em}c@{}}
\tilde \gamma (\vartau) &\vartau & 0 \\ a(\vartau)^\top & 0 & 1
\end{array}\right)$ is invertible. This implies that
$\left(\begin{array}{@{}c@{\hskip 0.5em}c@{}}
\tilde \gamma (\vartau) &\vartau 
\end{array}\right)$ is also invertible. So if $\tilde{P}[m,n]$ has a solution, $\tilde{P}[m-1,n-1]$ must have one.
}\stoplongue
\startlongue{
\section{End of proof of Theorem \ref{theo_Waz}}
\label{sec4}
We want to  show that a continuous function $\gamma$ 
making $P$ in (\ref{eqWaz}) invertible can be modified into a smoother one giving the 
same invertibility property. 
Let $\gamma_i$ denote the $i$th 
column of $\gamma$.
We start with modifying $\gamma _1$ into 
$\tilde{\gamma }_1$.
Since $\vartau$, $\gamma$ and the determinant are continuous, for any $x$ in $\Ouvt$,
there exists a strictly positive real number $r_x$, such that, may be after changing $\gamma _1$ into $-\gamma _1$,
\begin{equation}
\det\left(\vartau(y) \ \gamma_1 (x) \ \gamma_{2:m-n}(y) \right) > 0 \ 
, \qquad \forall y \in \BR_{r_x}(x) \ ,
\label{eq_detNeigh}
\end{equation}
where  $\gamma_{i:j}$ denotes the matrix composed of the $i^{th}$ to $j^{th}$ columns of $\gamma$.
The family of sets $\left(\BR_{r_x}(x)\right)_{x\in \Ouvt}$ is an open cover of $\Ouvt$.
Therefore, by \cite[Theorem 2.1]{Hir}, there exists  a subordinate 
$C^\infty$ partition of unity, i.e.~there exist a family of $C^\infty$ functions $\psi_x : \Ouvt\rightarrow \RR_{\geq 0}$
such that
\begin{eqnarray}
\Supp \left(\psi_x\right) \subset \BR_{r_x}(x) \quad \forall x \in \Ouvt \ , \label{property1}& \\
\{\Supp \left(\psi_x\right) \}_{x\in\Ouvt} \textrm{ is locally finite} \ , \label{property2} & \\
\sum_{x\in\Ouvt}{\psi_x(y)} = 1  \quad \forall y \in \Ouvt \label{property3}& \ .
\end{eqnarray}
With this, we define the function $\tilde{\gamma}_1$ on $\Ouvt$ by
$$
\tilde{\gamma}_1 (y) = \sum_{x\in\Ouvt} \psi_x(y) \gamma_1(x) \ .
$$
This function is well-defined and $C^\infty$ on $\Ouvt$ because the sum is finite at
each point according to (\ref{property2}). Using multi-linearity of 
the determinant, we have, for all $y$ in $\Ouvt$,
$$
\det\left(\vartau(y) \ \tilde{\gamma}_1 (y) \ \gamma_{2:m-n}(y) \right)
=\sum_{x\in\Ouvt} \psi_x(y)\det\left(\vartau(y) \ \gamma_1 (x) \ \gamma_{2:m-n}(y) \right) \ .
$$
Thanks to (\ref{property2}), at each point $y$ in $\Ouvt$, there is a finite number of
$\psi_x(y)$ which are not zero. 
Also, the right hand side is the sum of non negative terms because of 
(\ref{eq_detNeigh}) and the non negativeness of the $\psi _x$, and one 
of these terms is strictly positive because of (\ref{eq_detNeigh}) 
and (\ref{property3}).
Therefore, we can replace the continuous function $\gamma_1$ by the $C^\infty 
$ function $\tilde{\gamma}_1$ as a first 
column of $\gamma$. Then we follow exactly the same procedure for 
$\gamma _2$ with this modified $\gamma $. By proceeding this way, one column after the other,
we get our result.
}\stoplongue


\section{Construction of a diffeomorphism from an open set to $\RR^m$}
\label{ann_lem1}
We use the following notations:
\\
The complementary, closure and boundary
of a set $S$ are denoted $S^c$, $\CL(S)$ and 
$\partial S$, respectively. 
The Hausdorff distance $d_H$ between two sets $A$ and $B$ is defined by~:
\\[0.4em]\null \hfill $\displaystyle 
d_H(A ,B)\; = \; 
\max\left\{
\sup_{z_A \in A }\inf_{z_B \in B}|z_A -z_B|
\,  ,\,  
\sup_{z\in A}\inf_{z_B \in B }|z_A -z_B|
\right\} 
\  .
$\hfill \null \\[0.6em]
$Z(z,t)$ denotes the (unique) solution, at time $t$, to
$
\dot z = \chi(z)
$
going trough $z$ at time $0$
and
$\  
\Seps \;=\; \underset{t \in [0,\varepsilon]}{\bigcup} Z(\partial E,t) 
$.

\begin{lemma}
	\label{lem_annE}
	Let $E$ be an open strict subset 
	of $\RR^m$ verifying $\condC$, with a $C^s$ vector field $\chi$ and a $C^s$ mapping $\kappa$.
	There exists a strictly positive (maybe infinite) real number $\varepsilon _{\infty}$ such that,
	for any $\varepsilon $ in $[0,\varepsilon _{\infty}[$,
	there exists a $C^s$-diffeomorphism $\phi$: $\RR^m\rightarrow E$, such 
	that
	$$
	\phi(z)=z \quad \forall z\in \Eeps = E \cap \Sepscomp \quad , \qquad d_H(\partial \Eeps ,\partial E) \; \leq \;\varepsilon \,   \sup_z |\chi(z)| \ .
	$$
\end{lemma}

\startjournal{%
	\indent{\it Proof.}\ignorespaces
	We give here only a sketch. A complete proof is given in \cite{BerPraAndSIAMHal}.
	
	\noindent The definition of $\Eeps$ using the flow generated by $\chi$ gives
	$d_H(\partial \Eeps ,\partial E)\; \leq \; \varepsilon \,  \sup_\zeta |\chi(\zeta)|$.
	Let $
	\tde (z)
	$ in $[-\deps ,+\infty [$ be the time needed by a solution to $
	\dot z = \chi(z)
	$ starting from $z$ in $\left(\Edeps\right)\comp$ (which is open) to reach
	$\partial E$, i.e. satisfying:
	\\[0.6em]\null \hfill $\displaystyle 
	\kappa \left(Z(z,
	\tde (z)
	)\right)=0
	\quad \Longleftrightarrow\quad 
	Z(z,
	\tde (z)
	)\in \partial E.
	$\hfill \null \\[0.6em]
	This gives a $C^s$ function on $\left(\Edeps\right)\comp$
	which satisfies
	\\[0.6em]\null \hfill $\displaystyle 
	\tde(z)\in [-\deps  ,-\varepsilon ]\qquad \forall z\in \Eeps\setminus\Edeps
	\  .
	$ \hfill \null \\[0.6em]
	We extend by continuity to $\RR^m$ by letting
	$\tde (z)=-\deps$ for all $z$ in $\Edeps$.
	Let $\temps:\RR\to \RR$ be a function
	such that the function $t\mapsto \temps(t)-t$ is a $C^s$ (decreasing) diffeomorphism from $\RR$ onto 
	$]0,+\infty [$ mapping $[-\varepsilon ,+\infty [$ onto $]0,\varepsilon ]$ and being ``minus'' identity
	on $]-\infty ,-\varepsilon]$ (i.e. $\temps$ is zero on $]-\infty ,-\varepsilon]$). 
	Then $\phi$ defined below is a $C^s$ diffeomorphism which satisfies the required properties
	\\[0.7em]\null \hfill $\displaystyle
	\phi(z)=
	\left\{
	\begin{array}{ll}
		Z\left(z,
		\temps(\tde (z))
		\right), &\quad \textrm{if} \ z \in \left(\Eeps \right) ^c
		\  ,
		\\[0.5em]
		z, &\quad \textrm{if} \ z \in \Eeps
		\  .
	\end{array}
	\right.
	$\hfill \null
}\stopjournal

\startlongue{%
	\begin{proof}
		According to Condition $\condC$, $\chi$ is bounded and $K_0$ is a compact
		subset  of the open set $E$. It follows that there exists a strictly positive (maybe infinite) real number $\varepsilon _{\infty}$
		such that
		$$
		Z(z,t)\not\in K_0
		\qquad \forall (z,t)\in \partial E\times [0,\deps _{\infty}[
		\  .
		$$
		In the following $\varepsilon $ is a real number in  $[0,\varepsilon _{\infty}[$.
		
		We introduce the notations
		$$
		\Sdeps= \underset{t \in [0,\deps ]}{\bigcup} Z(\partial E,t) 
		\quad ,\qquad
		\Edeps = E \cap \Sdepscomp
		$$
		and establish some properties.

		\noindent
		--
		\textit{$E$ is forward invariant for $\chi$}. This is a 
		direct consequence of points $\condC$.\ref{point1} and $\condC$.\ref{point2}.

		\noindent-- \textit{$\Sdeps$ is closed.}  Take a sequence $(z_k)$ of
		points in
		$\Sdeps$
		converging to $z^*$.  By definition of $\Sdeps$, 
		there exists a sequence $(t_k)$,
		such that~:
		$$
		t_k \in [0,\deps ]
		\qquad \textrm{and}\qquad 
		Z(z_k,-t_k)\in \partial E
		\qquad \forall k \in \NN
		\  .
		$$ 
		Since $[0,\deps ]$ is compact, one
		can extract a
		subsequence $(t_{\sigma(k)})$ converging to $t^*$
		in
		$[0,\deps ]$, and by continuity
		of the function $(z,t)\mapsto Z(z,-t)$,
		$(Z(z_{\sigma(k)},t_{\sigma(k)}))$ tends to $Z(z^*,- t^*) $
		which is in
		$\partial E$, since $\partial E$ is closed.  Finally,
		because $t^*$ is in $[0,\deps ]$, $z^*$ is in $\Sdeps$ by 
		definition.

		\noindent-- \textit{$\Sdeps $ is contained in $\CL(E)$}. Since, $E$ is forward invariant for $\chi$, 
		and so is $\CL(E)$
		(see \cite[Theorem 16.3]{Hahn}).
		This implies
		$$
		\partial E\; \subset\; \Sdeps = \underset{t \in 
			[0,\deps ]}{\bigcup} Z(\partial E,t) \; \subset\; \CL(E)
		\;=\; E\cup \partial E \ .
		$$
		At this point, it is useful to note that, because $\Sdeps $ is a closed subset of $\CL(E)$ and $E$ is open, 
		we have $\Sdeps \cap E \; = \; \Sdeps \! \setminus\! \partial E$.
		This implies~:
		\begin{equation}
		E \! \setminus\! \Edeps = \left(\Edeps\right)\comp \cap E = (E\comp \cup \Sdeps) \cap E = \Sdeps \cap E = \Sdeps \! \setminus\! \partial E,
		\label{EminusEeps}
		\end{equation}
		and
		$
		E= \Edeps
		\renewcommand{\arraystretch}{0.2}
		\raise -0.2em\hbox{$\begin{array}{@{\; }c@{\; }}
				\cup\\\scriptscriptstyle\neq 
			\end{array}$}(\Sdeps \! \setminus\! \partial E)
		$.
		
		With all these properties at hand, we define now two functions $\tde$ and 
		$
		\theta 
		$.
		The assumptions of global attractiveness of the closed set $K_0$ 
		contained in $E$ open, of 
		transversality 
		of $\chi$ to
		$\partial E$, and the property of forward-invariance of $E$, imply
		that, for all $z$ in $E\comp$, there exists a unique non negative real number $
		\tde (z)
		$
		satisfying:
		$$
		\kappa \left(Z(z,
		\tde (z)
		)\right)=0
		\quad \Longleftrightarrow\quad 
		Z(z,
		\tde (z)
		)\in \partial E.
		$$ 
		The same arguments in reverse time allow us to see that,
		for all $z$ in $\Sdeps$, $
		\tde (z)
		$ exists,
		is unique and in $[-\deps ,0]$. This way, the function $z\to 
		\tde (z)
		$ is defined 
		on $\left(\Edeps\right)\comp$.
		Next,
		for all $z$
		in
		$\left(\Edeps\right)\comp$, we define :
		$$
		\theta (z)=Z(z,\tde (z)).
		$$
		Thanks to the 
		transversality assumption, the Implicit Function Theorem implies the
		functions $z\mapsto 
		\tde (z)
		$ and $z\mapsto 
		\theta (z)
		$ are 
		$C^s$  on $\left(\Edeps\right)\comp$.

		\begin{remark}
			\label{rem2}
			\normalfont
			$\kappa$ having constant rank $1$ in a 
			neighborhood of $\partial E$, this set is a closed, regular 
			submanifold of $\RR^m$. The arguments above show that $z\mapsto (
			\theta (z)
			,
			\tde (z)
			)$ is a 
			diffeomorphism between $E\comp$ and
			$\partial E\times [0,+\infty[$. Since $\partial E$ is a 
			deformation retract of $E\comp$ and the open 
			unit ball is diffeomorphic to $\RR^m$ \cite{GonTos}, if $E$ were bounded, 
			$E\comp$ could be seen as a $h$-cobordism
			between $\partial E$ and the unit sphere $\SS^{m-1}$ and
			$
			\tde 
			$ as a Morse function with no critical point in $E\comp$. See \cite{Milnor} for 
			instance.
		\end{remark}
		
		Now we evaluate $
		\tde (z)
		$ for $z$ in $\partial \Sdeps $. Let $z$ be
		arbitrary in $\partial \Sdeps$ and therefore in $\Sdeps $ which is
		closed.  Assume its corresponding $
		\tde (z)
		$ is in
		$]-\depsÊ, 0[$. The Implicit Function Theorem shows
		that $z\mapsto 
		\tde (z)
		$ and $z\mapsto
		\theta (z)
		$ are defined and continuous
		on a neighborhood of $z$.  Therefore, there exists
		a strictly 
		positive real number $r$
		satisfying
		$$
		\forall y \in \BR_r(z)\: ,\; \exists t_y \in ]-\deps ,0[\: :\;  Z(y,t_y)
		\in \partial E
		\  .
		$$
		This implies that the neighborhood $\BR_r(z) $ of $z$ is contained in 
		$\Sdeps$, in contradiction with
		the fact that $z$ is on the boundary
		of $\Sdeps$.
		This shows that, for all $z$ in $\partial \Sdeps$, $
		\tde (z)
		$ is
		either $0$ or $-\deps $.
		We write this as
		$$
		\left(\partial \Sdeps \right)_i=\left\{ z \in 
		\Sdeps 
		\,  :\: 
		\tde (z)
		=-\deps 
		\right\}
		\quad ,\qquad 
		\partial \Sdeps = \partial E \cup \left(\partial \Sdeps \right)_i
		\  .
		$$
		
		Now we want to prove $\partial \Edeps \subset\left(\partial \Sdeps 
		\right)_i$. To obtain this result, we start by showing~:
		\begin{equation}
		\label{LP18}
		\partial \Edeps \cap \partial E =\emptyset \quad \textrm{and} \quad \partial \Edeps \subset \partial \Sdeps
		\ .
		\end{equation}
		Suppose 
		the existence of $z$ in $\partial \Edeps \cap \partial E$.
		$z $ being in $\partial E$, its corresponding $
		\tde (z)
		$ is $0$. By 
		the Implicit Function Theorem, there exists
		a strictly 
		positive real number $r$ such that,
		$$
		\forall y \in \BR_r(z)\: ,\; \exists t_y \in \left]\textstyle-
		\varepsilon ,\varepsilon 
		\right[\: :
		\displaystyle Z(y,t_y) \in \partial E 
		\  .
		$$
		But, by definition, any $y$, for which there exists $t_y$ in 
		$]-\varepsilon ,0]$, is in $\Sdeps $. If instead $t_y$ is 
		strictly positive, then necessarily $y$ is in $E\comp$, because $E$ is forward-invariant for $\chi$
		and a solution starting in $E$ cannot reach $\partial E$ in positive finite time. We have obtained~:
		$
		\BR_r(z)\subset \Sdeps \cup E\comp = (\Edeps )\comp
		$.
		$\BR_r(z)$ being a neighborhood of $z$, this contradicts the fact 
		that $z$ is in the boundary of $\Edeps$.  
		
		At this point, we have proved that $\partial \Edeps \cap \partial E =\emptyset$, and,
		because $\Edeps$ is contained in $E$, this implies
		$\partial \Edeps \subset E$. With this, (\ref{LP18}) will be established by proving that we have
		$
		\partial \Edeps \subset \partial \Sdeps
		$.
		Let $z$ be arbitrary in $\partial \Edeps$ and therefore in $E$ 
		which is open. There exists a strictly 
		positive real number $r$ such that we have~:
		\\[0.7em]\null \hfill $\displaystyle 
		\emptyset\; \neq \; \BR_r(z)\cap \Edeps \; =\; \BR_r(z)\; \cap\; \left(E\cap \Sdepscomp\right)
		\ ,\quad
		\emptyset\; \neq \; \BR_r(z)\cap \left(\Edeps\right)\comp \;=\; \BR_r(z)\; \cap\; \left(E\comp \cup \Sdeps \right)
		\ ,\quad \BR_r(z)\subset E\ .
		$\hfill \null \\[0.7em]
		This implies $
		\BR_r(z)\; \cap\; \Sdepscomp\; \neq \; \emptyset
		$ and $ 
		\BR_r(z)\; \cap\; \Sdeps \; \neq \; \emptyset
		$
		and therefore that $z$ is in $\partial \Sdeps $.
		
		We have established
		$
		\partial \Edeps \cap \partial E\;=\; \emptyset
		$, $\partial \Edeps \subset \partial \Sdeps$ 
		and 
		$\partial \Sdeps \;=\; \partial E \cup \left(\partial \Sdeps \right)_i$.
		This does imply~:
		\begin{equation}
		\partial \Edeps \; \subset\; \left(\partial \Sdeps \right)_i
		\;=\; \{z\in E\,  :\: 
		\tde (z)
		=-\deps  \}
		\  .
		\label{boundaryEeps}
		\end{equation}
		This allows us to extend by continuity the definition of $\tde $ to $\RR^m$ by letting
		$$
		\tde (z)\;=\; -\deps \qquad \forall  z\in \Edeps
		\  .
		$$
		All the properties we have established for $\Sdeps$ and $\Edeps$ hold also for $\Seps$ and $\Eeps$. In 
		particular, we have
		\begin{equation}
		\label{newLP2}
		\tde(z)\in [-\deps  ,-\varepsilon ]\qquad \forall z\in \Eeps\setminus\Edeps
		\  .
		\end{equation}

		Thanks to all these preparatory steps, we are finally ready to define 
		a function $\phi:\,  \RR^m\to E$.
		Let $\temps:\RR\to \RR$ be a function
		such that the function $t\mapsto \temps(t)-t$ is a $C^s$ (decreasing) diffeomorphism from $\RR$ onto 
		$]0,+\infty [$ mapping $[-\varepsilon ,+\infty [$ onto $]0,\varepsilon ]$ and being ``minus'' identity
		on $]-\infty ,-\varepsilon ]$, i.e.
		\\[0.6em]\null \hfill $\displaystyle 
		\temps(t)-t\;=\; -t\quad \forall t\leq -\varepsilon
		\  .
		$\hfill \null \\[0.6em]
		We have
		\\[0.7em]\null \hfill $\displaystyle 
		\temps(t) > t\qquad \forall t\in \RR
		\quad ,\qquad 
		\temps(\tde(z))\;=\; 0\qquad \forall z\in \Eeps\setminus\Edeps
		\  .
		$\hfill \null \refstepcounter{equation}\label{7}$(\theequation)$\\[0.1em]
		We let~:
		\\[0.1em]\null \hfill $\displaystyle 
		\phi(z)=
		\left\{
		\begin{array}{ll}
		Z\left(z,
		\temps(\tde (z))
		\right), &\quad \textrm{if} \ z \in \left(\Edeps \right) ^c
		\  ,
		\\[0.5em]
		z, &\quad \textrm{if} \ z \in \Edeps
		\  .
		\end{array}
		\right.
		$\hfill \null \\[0.7em]
		The image of $\phi$ is contained in $E$ since we have (\ref{7}), $\Edeps\; \subset\; E$ and~:
		$$
		Z(z,\tde (z))\; \in\; \partial E
		\quad ,\qquad 
		Z(z,t)\; \in\; E\qquad \forall (z,t)\in \partial E\times \RR_{>0}
		\  .
		$$
		Like the functions $Z$, $\temps$, and $\tde$, the function $\phi$ is $C^s$ on the interior of 
		$\left(\Edeps\right)\comp$.
		Also, since (\ref{7}) implies
		\begin{equation}
		\label{newLP1}
		\phi(z)\;=\; z\qquad  \forall z\in \Eeps\setminus\Edeps
		\  ,
		\end{equation}
		$\phi$ is trivially $C^s$ on $\Eeps$ and therefore on $\left(\Edeps\right)\comp\cup \Eeps=\RR^m$.
		
		We now show that $\phi$ is invertible.
		Because of (\ref{newLP1}), this is trivial on $\Eeps$. 
		Let $y$ be arbitrary in $E\cap \left(\Edeps\right)\comp=E\cap \Sdeps $.
		To $y$ corresponds $\tde(y)$ in  the interval $[-\deps ,0[$.
		Thus, $-\tde(y)$ is in $]0,\deps ]$, image of
		$[-\deps  ,+\infty [$ by the $C^s$ diffeomorphism $t\mapsto \temps(t)-t$.
		Hence there exists $\sde(y)$ in
		$[-\deps ,+\infty [$ satisfying
		\begin{equation}
		\label{newLP3}
		\temps(\sde(y))\;-\; \sde(y)\;=\; -\tde(y)
		\  .
		\end{equation}
		Moreover, (\ref{newLP2}) implies that for $y$ in $\Eeps\setminus\Edeps$ subset of $E\cap 
		\left(\Edeps\right)\comp$, we have
		$$
		\sde(y)\;=\; \tde(y)
		$$
		So with letting
		$$
		\sde(y)\;=\; \tde (y)\;=\; -\deps \qquad \forall  y\in \Edeps
		$$
		we have defined a function $\sde: E \to [-\deps, + \infty [$, which thanks to the implicit function 
		theorem, is $C^s$ and satisfies (\ref{newLP3}).
		
		This allows us to define properly $\phi^{-1}:\RR^m\to E$ as~:
		$$
		\phi^{-1}(y)=
		Z\left(y,-\temps(\sde(y))\right) \ .
		$$
		By composition, this function is $C^s$ and it is an inverse of $\phi$ in particular because, with 
		(\ref{newLP3}), we have
		$$
		\tde(Z(y,-\nu(\sde(y))))=\tde(Z(y,\tde(y)-\sde(y)))=\sde(y)
		\qquad \forall y\in E
		\  .
		$$
		This gives
		\\[0.7em]$\displaystyle 
		\phi(Z(y,-\nu(\sde(y)))=Z(Z(y,-\nu(\sde(y))),\nu(\tde(Z(y,-\nu(\sde(y))))))
		$\hfill \null \\\null \hfill $\displaystyle =
		Z(Z(y,-\nu(\sde(y))),\nu(\sde(y)))=y
		$\\[0.7em]
		
		All this implies
		$\phi$ is a $C^s$-diffeomorphism from $\RR^m$ to $E$.
		
		Finally, we note that, for any point $z_\varepsilon $ in 
		$\partial E_\varepsilon$, there exists a point $z$ in $\partial E$ 
		satisfying~:
		\\[0.3em]\null \hfill $\displaystyle 
		|z_\varepsilon -z|\;=\; \left|\int_0^{\varepsilon } \chi(Z(z ,s)) ds\right|
		\; \leq \; \varepsilon \,  \sup_\zeta |\chi(\zeta)| 
		\  .
		$\hfill \null \\[0.7em]
		And conversely, for any $z$ in $\partial E$, there exist 
		$z_\varepsilon $ in $\partial \Eeps$
		satisfying~:
		$$
		|z_\varepsilon -z|\;=\; \left|\int_0^{\varepsilon } \chi(Z(z ,s)) ds\right|
		\; \leq \; \varepsilon \,  \sup_\zeta |\chi(\zeta)| 
		\  .
		$$
		It follows that, with $\varepsilon$ as small as needed,
		\\[0.7em]\null \hfill $\displaystyle 
		d_H(\partial E_\varepsilon ,\partial E)\; \leq \; \varepsilon \,  \sup_\zeta |\chi(\zeta)| 
		$\hfill \null \refstepcounter{equation}\label{dH}$(\theequation)$\  
		\end{proof}
}\stoplongue

Lemma \ref{lem1} is a direct consequence of Lemma \ref{lem_annE} if we 
pick $\varepsilon _\infty$, maybe infinite, satisfying
$$
Z(z,t)\not\in K
\qquad \forall (z,t)\in \partial E\times [0,\deps _{\infty}[
\  .
$$
$\varepsilon _\infty$ can be chosen strictly positive since $d(K,\partial E)$ is non 
zero and $\chi$ is bounded.

\section{Proof of case b) of  Theorem \ref{theo_diff_ext}}
\label{app_diff_ext_case_b}
To complete the proof of Theorem \ref{theo_diff_ext}, we use another technical result.
\begin{lemma}[Diffeomorphism extension from a ball]
	\label{lem2}
	Consider a $C^2$ diffeomorphism
	$\lambdaa:\BR_R(0)\rightarrow\lambdaa(\BR_R(0))\subset\RR^m$, with $R$ a strictly positive real number.  For any real number
	$\varepsilon$ in $]0,1[$, there exists a diffeomorphism
	$
	\lambdae
	:\RR^m\rightarrow\RR^m$ satisfying
	$$
	\lambdae
	(\varchi) = \lambdaa(\varchi)\qquad \forall \varchi \in 
	\CL(\BR_{\frac{R}{1+\varepsilon}}(0))\  .
	$$
\end{lemma}

\begin{proof}
It sufficient to prove that \cite[Theorem 8.1.4]{Hir} applies. 
We let
$$
U=\BR_{\frac{R}{1+\frac{\varepsilon}{2}}}(0)
\quad ,\qquad 
A=\CL(\BR_{\frac{R}{1+\varepsilon}}(0))
\quad ,\qquad 
I=\left]-\frac{\varepsilon}{2},1+\frac{\varepsilon}{2}\right[
\  ,
$$
and, without loss of generality we may assume that $\lambdaa(0) = 0$.

Then, consider the function $F:U\times I\rightarrow\RR^m$ defined as
$$
F(z,t) = \left(\frac{\partial \lambdaa}{\partial z}(0)\right)^{-1}\frac{\lambdaa(zt)}{t} \ , \ \forall t \in I 
\setminus \{0\} \ , \quad F(z,0)=z \ .
$$
We start by showing that $F$ is an isotopy of $U$. 
\begin{list}{}{%
\parskip 0pt plus 0pt minus 0pt%
\topsep 0pt plus 0pt minus 0pt
\parsep 0pt plus 0pt minus 0pt%
\partopsep 0pt plus 0pt minus 0pt%
\itemsep 0pt plus 0pt minus 0pt
\settowidth{\labelwidth}{$\quad  \bullet$}%
\setlength{\labelsep}{0.5em}%
\setlength{\leftmargin}{\labelwidth}%
\addtolength{\leftmargin}{\labelsep}%
}
	\item[$\bullet$] For any $t$ in $I$, the function 
	$z\mapsto F_t = F(z,t)$ is an embedding from 
	$U$ onto $F_t(U)\subset\RR^m$.
	Indeed, for any pair $(z_a,z_b)$ in $U^2$ satisfying
	$
	F(z_a,t)=F(z_b,t)
	$,
	we obtain
	$
	\lambdaa(z_at)=\lambdaa(z_bt)
	$
	where $(z_at,z_bt)$  is in $U^2$. The function $\lambdaa$ being 
	injective on this set, we have $z_a=z_b$ which establishes 
	$F_t$ is injective.
	Moreover, we have:
	$$
	\frac{\partial F_t}{\partial z}(z)
	=  \left(\frac{\partial \lambdaa}{\partial z}(0)\right)^{-1}\frac{\partial \lambdaa}{\partial z}(zt)
	\quad \forall t\in I \setminus \{0\} 
	\quad ,\qquad 
	\frac{\partial F_0}{\partial z}(z) =  \Id.
	$$
	Hence, $F_t$ is full rank on $U$ and therefore an 
	embedding.
	
	\item[$\bullet$] For all $z$ in $U$, the function $t\mapsto F(z,t)$ is $C^1$.
	This follows directly from the fact that, the function $\lambdaa$ being $C^2$, and $\lambdaa(0)=0$, we have
	$$
	\frac{\lambdaa(zt)}{t} = \frac{\partial \lambdaa}{\partial z}(0)z + z'\left(\frac{\partial^2 \lambdaa}{\partial z\partial z}(0)\right) z \frac{t}{2} + \circ (t) \ .
	$$
	In particular, we obtain 
	$
	\frac{\partial F}{\partial t}(z,t) = 
	\left(\frac{\partial \lambdaa}{\partial z}(0)\right)^{-1} \rho(z,t)
	$ 
	where
	$$
	\rho(z,t) = \frac{1}{t^2}\left[\frac{\partial \lambdaa}{\partial z}(zt)zt -\lambdaa(zt)\right]
	\quad \forall t \in I \setminus \{0\}
	\ ,\quad 
	\rho(z,0) = \frac{1}{2} z'\left(\frac{\partial^2 \lambdaa}{\partial z\partial z}(0)\right) z \ .
	$$
\end{list}

Moreover, for all $t$ in $I$, the function $z\mapsto \frac{\partial F}{\partial t}(z,t)$ is locally Lipschitz and therefore gives rise to an ordinary differential equation with unique solutions. 

Also the set 
$
\bigcup_{(z,t)\in U\times I}  \{ (F(z,t),t) \}
$
is open. This follows from Brouwer's Invariance theorem since the function $(z,t)\mapsto (F(z,t),t)$
is a diffeomorphism on the open set $U\times I$.
With \cite[Theorem 8.1.4]{Hir}, we know there exists a diffeotopy $F_e$
from $\RR^m\times I$ onto $\RR^m$ which satisfies $F_e=F$ on $A\times[0,1]$.
Thus, the diffeomorphism $\lambdae= F_e(., 1)$ defined on $\RR^m$ onto $\RR^m$ verifies
$\lambdae(z)=F_e(z,1)=F(z,1)=\lambdaa(z)$  for all $z\in A$.
\hfill\end{proof}

We now place ourselves in the case b) of Theorem \ref{theo_diff_ext}, namely we suppose that $\ohia$ is $C^2$ and $\OuvDef$ is $C^2$-diffeomorphic to $\RR^m$. 
	Let $\phi_1: \OuvDef \rightarrow \RR^m$ denote the corresponding diffeomorphism.
	Let $R_1$ be a strictly positive real number such that the open ball $\BR_{R_1}(0)$ contains $\phi_1(K)$.
	Let $R_2$ be a real number strictly larger than $R_1$.
	With Lemma \ref{lem1} again, and since $\BR_{R_2}(0)$ verifies condition $\condC$, there exists of
	$C^2$-diffeomorphism  $\phi_2:\RR^m \to \BR_{R_2}(0)$ satisfying
	$
	\phi_2(z)\;=\; z
	$
	for all $z$ in $\BR_{R_1}(0)$.
	At this point, we have obtained a $C^2$-diffeomorphism 
	$
	\phi=\phi_2\circ \phi_1:\OuvDef \rightarrow\BR_{R_2}(0).
	$
	Consider $\lambda=\ohia\circ\phi^{-1} :
	\BR_{R_2} (0)\rightarrow \ohia(\OuvDef)
	\quad  \left( = \lambda (\BR_{R_2} (0))\right)$.
	According to Lemma \ref{lem2}, we can extend $\lambda$ to
	$\lambda_e : \RR^m \rightarrow \RR^m$ such that $\lambda_e=\ohia\circ\phi^{-1}$ 
	on $\BR_{R_1}(0)$. Finally, consider $
	\ohie
	=\lambda_e \circ \phi_1 : \OuvDef 
	\rightarrow \RR^m$. Since, by construction of $\phi_2$, $\phi=\phi_1$ on $\phi _1^{-1}(\BR_{R_1}(0))$
	which contains $K$, we have $
	\ohie
	=\ohia$ on $K$.


\section{Proof of Lemma \ref{lemCond}}
\label{ann_lem3}
The compact $K_0$ being globally asymptotically attractive and interior
to $E$ which is forward invariant, $E$ is globally attractive.  It is
also stable due to the continuity of solutions with respect to initial
conditions uniformly on compact time subsets of the domain of
definition.  So it is globally asymptotically stable.  It follows from
\cite[Theorem 3.2]{Wil} that there exist $C^\infty$ functions
$V_K:\RR^m\to \RR_{\geq 0}$ and $V_{E}:\RR^m\to \RR_{\geq 0}$ which
are proper on $\RR^m$ and a class $\mathcal{K}_\infty $ function
$\alpha $ satisfying
    \begin{eqnarray*}
        &
\displaystyle
\alpha (d(\varchi ,K_0)) \leq  V_K(\varchi ) \  ,\quad 
\alpha (d(\varchi ,E)) \leq  V_{E}(\varchi ) 
\qquad \forall \, \varchi  \in \RR^m\;,
\\
&\displaystyle 
 V_K(\varchi )  =  0 \quad \forall z\in K_0 \quad ,\quad    V_{E}(\varchi ) = 0\quad \forall \, \varchi  \in E\;,\\
&\displaystyle
\frac{\partial V_K}{\partial \varchi }(\varchi )\, \chi(\varchi )  \leq   -V_K(\varchi ) 
\  ,\quad 
\frac{\partial V_{E}}{\partial \varchi }(\varchi )\,\chi(\varchi )
        \leq   -V_{E}(\varchi ) \qquad \forall \, \varchi  \in \RR^m\;.
    \end{eqnarray*}
With $\overline{d}$ an arbitrary strictly positive real number,
the notations
$$
v_{E} \;=\;    \sup_{\varchi \in\RR^m: \,  d(\varchi ,E)\leq \overline{d}  }V_K(\varchi  )
\quad ,\qquad 
\mu =\frac{\alpha (\overline{d}   )}{2v_{E}}
\  ,
$$
and since $\alpha $ is of class $\mathcal{K}_\infty $, we obtain 
the implications
\\[0.7em]$\displaystyle 
V_{E}(\varchi ) \!+ \!\mu   V_K(\varchi ) \!=\!\alpha (\overline{d}   )
\quad \Rightarrow \quad 
\alpha (d(\varchi ,E))\!\leq\! V_{E}(\varchi ) \!\leq \!\alpha (\overline{d}   )
$\hfill \null \\[0.3em]\null \hfill $\displaystyle  \Rightarrow \quad 
d(\varchi ,E)\leq \overline{d}   
\quad \Rightarrow \quad 
V_K(\varchi  )\leq v_{E}
\:   .
$\\[0.7em]
With our definition of $\mu $, this yields also
$$
\alpha (\overline{d}   )-\mu \,   V_K(\varchi )
= V_{E}(\varchi )  
\quad \Rightarrow \quad 
0\; <\; \frac{\alpha (\overline{d}   )}{2} \leq  V_{E}(\varchi )
\quad \Rightarrow \quad 
0 < d(\varchi ,E)\leq \overline{d} 
\;  .  
$$
On the other hand, with the compact notation
$
\Lyap (\varchi ) = V_{E}(\varchi )+\mu V_K(\varchi )
$,
we have
$
\frac{\partial \Lyap }{\partial 
\varchi }(\varchi ) \, \chi (\varchi ) 
\leq 
-\Lyap (\varchi )
$, for all $\varchi  \in \RR^m$.
All this implies that the sublevel set
$
\E=\{\varchi  \in \RR^m:\,  \Lyap (\varchi ) <  \alpha ( \overline{d}    )\}
$
is contained in $\{\varchi \in\RR^m\!:\,  d(\varchi ,E) \in [0, \overline{d}]\}$
and that $\CL(E)$ is contained in $\E$. Besides, $\E$ verifies condition $\condC$
with the vector field $\chi$ and the function
$\kappa =\Lyap-\alpha (\overline{d})$.

\bibliography{biblio}
\bibliographystyle{plain}

\end{document}